\documentclass[a4paper,12pt]{amsart}
\usepackage{enumerate}

\usepackage[latin1]{inputenc}
\usepackage[T1]{fontenc}
\usepackage[english]{babel}

\usepackage{mathrsfs}
\usepackage{amscd}
\usepackage{amsfonts}
\usepackage{amsmath}
\usepackage{amssymb}
\usepackage{amstext}
\usepackage{amsthm}
\usepackage{amsbsy}

\usepackage{xspace}
\usepackage[all]{xy}
\usepackage{graphicx}
\usepackage{url}
\usepackage{latexsym}

\newtheorem{theo}{Theorem}[section]

\newtheorem{claim}[theo]{Claim}

\theoremstyle{definition}
\newtheorem{dfn}[theo]{Definition}

\newtheorem{fact}[theo]{Fact}

\newcommand{\N}{\ensuremath{\mathbb{N}}}
\newcommand{\Z}{\ensuremath{\mathbb{Z}}}

\newcommand{\R}{\ensuremath{\mathbb{R}}}

\newcommand{\A}{\ensuremath{\aleph}}

\newcommand{\M}{\ensuremath{\mathcal{M}}}

\newcommand{\vs}{\vspace{0.3cm}}

\DeclareMathOperator{\rk}{rk}

\newcommand{\divhull}[1]{\langle {#1}\rangle_{\mathbb{Q}}}
\newcommand{\rc}[1]{RC(\mathbb{Q}({#1}))}

\newcommand{\book}[2]{{\scshape#1}, {\bf #2}}
\newcommand{\pre}[2]{{\scshape#1}, #2}
\newcommand{\publ}[6]
{{\scshape#1}, #2, {\itshape #3}, {\bf #4} (#5), pp.~#6.}

\title[Recursively saturated real closed fields]{A valuation theoretic characterization of recursively saturated real closed fields}

\author{Paola D'Aquino}
\address{Dipartimento di Matematica, Seconda Universit\`a di Napoli, Italy}
\email{paola.daquino@unina2.it}
\author{Salma Kuhlmann}
\address{FB Mathematik \& Statistik, Universit\"at Konstanz,
Germany}
 \email{salma.kuhlmann@uni-konstanz.de}
  \author{Karen Lange}
\address{Department of Mathematics, Wellesley College, United States}
 \email{karen.lange@wellesley.edu}
\date{\today}

\begin{document}
\begin{abstract}
 We  give a valuation theoretic characterization for a real closed field to be recursively saturated.  This builds on work in
in \cite{kkmz}, where the authors gave such a characterization for  $\kappa$-saturation, for a cardinal $\kappa
\geq \aleph_0$.  Our result extends the characterization of Harnik and Ressayre \cite{hr} for a divisible ordered abelian group to be recursively saturated.
\end{abstract}

\maketitle


\vs
\section {Introduction}

%


Recursive saturation was introduced by Barwise and Schlipf in \cite{bs}.
\begin{dfn}(\cite{bs})
A structure \M\ for a computable language $L$ is {\em recursively saturated} if given a computable set of $L$-formulas $\tau(x, \bar{v})$ and  a tuple $\bar{a}$ in \M\ appropriate to substitute for $\bar{v}$ such that every finite subset of $\tau(x,\bar{a})$
is satisfied in \M, then the whole $\tau(x,\bar{a})$ is satisfied in \M.
\end{dfn}
\noindent In \cite{dks} a characterization of countable recursively
saturated real closed fields was obtained in terms of their integer
parts. $\kappa$- saturation (for an arbitrary infinite cardinal
$\kappa$) has been investigated in terms of valuation theory for
divisible ordered abelian groups in \cite{sgr}, real closed fields
in \cite{kkmz}, and more generally for o-minimal expansions of real
closed fields in \cite{cdk}. In this paper we extend the above
valuation theoretical characterizations to recursively saturated
real closed fields, thereby extending results of \cite{hr} for
divisible ordered abelian groups.


\section{Preliminaries}
\subsection{Scott sets}

A subset $\mathcal{T}\subset 2^{<\omega}$ is a {\em tree} if every substring of an element of $\mathcal{T}$ is also an element of $\mathcal{T}$.  If $\sigma, \tau\in 2^{<\omega}$, we let $\sigma\prec\tau$ denote that $\sigma$ is a substring of $\tau$.   A sequence $f\in2^{\omega}$ is a {\em path} through a tree $\mathcal{T}$ if for all $\sigma\in 2^{<\omega}$ with $\sigma\prec f$, the string $\sigma$ is an element of $\mathcal{T}$.
 For any string $\sigma\in 2^{<\omega}$, the length of $\sigma$, denoted by $length(\sigma)$, is the unique  $n\in\omega$ satisfying $\sigma\in 2^n$.
\begin{dfn}
 A nonempty set $S\subset \mathbb{R}$ is a {\em Scott set} if
\begin{enumerate}
\item $S$ is computably closed, i.e., if $r_1,\ldots r_n\in S$ and $r\in\mathbb{R}$ is computable from (the Turing join of) $r_1,\ldots r_n$, then $r\in S$.
\item If an infinite tree $\mathcal{T}\subset 2^{<\omega}$ is computable in some $r\in S$, then $\mathcal{T}$ has a path that is computable in some $r'\in S$.
\end{enumerate}
\end{dfn}

\begin{fact}
Any Scott set $S$ is an archimedean real closed field.
\end{fact}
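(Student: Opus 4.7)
The plan is to verify each part of the claim using only the computable closedness axiom; the tree property in condition (2) is not needed here. The archimedean property is immediate since $S \subseteq \mathbb{R}$ and $\mathbb{R}$ is archimedean, so the content is to show that $S$ is a real closed subfield of $\mathbb{R}$.

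First I would check that $S$ is a subfield. The reals $0$ and $1$ are Turing-computable (from no oracle), so they belong to $S$ by condition (1), using that $S$ is nonempty. The field operations $+,-,\cdot,/$ on $\mathbb{R}$ are Turing-computable functions of their operands, uniformly in the oracles giving those operands, so condition (1) yields closure of $S$ under each of them. The order on $S$ is inherited from $\mathbb{R}$.

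For real closedness, it suffices to show that every positive element of $S$ has a square root in $S$ and that every polynomial of odd degree with coefficients in $S$ has a root in $S$. For square roots, $\sqrt{r}$ is computable from $r$ (for instance by bisection on $[0,r+1]$), so $\sqrt{r}\in S$. For the second axiom I would invoke the standard computable analysis result that a real root of a nonzero polynomial with real coefficients is Turing-computable from those coefficients. The outline: from the coefficients one computes effective root bounds (e.g.\ Cauchy's bound), giving rationals $a<b$ at which the odd-degree polynomial $p$ takes strictly opposite signs (using the dominance of the leading term at $\pm\infty$); then bisection converges to a real root. The step I expect to need the most care is that the sign of a computable real is not in general decidable, so at each bisection step with rational midpoint $m$ one must either effectively observe a definite sign of $p(m)$ from an approximation of the coefficients, or, if no sign appears at the current precision, perturb $m$ slightly; only finitely many rationals in $[a,b]$ are zeros of $p$, so the procedure always succeeds. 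Since the resulting real root is Turing-computable from the coefficients, it lies in $S$ by condition (1), completing the verification.
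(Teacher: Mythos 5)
The paper states this as a \emph{Fact} without proof, treating it as folklore, so there is no in-paper argument to compare against; your proof is the natural one and is essentially correct. You rightly observe that only the closure condition (1) is used and that nonemptiness together with closure under Turing reducibility gives you all computable reals (in particular $0$, $1$, and $\mathbb{Q}$), and closure under the computable field operations. The real-closedness reduces, as you say, to computability of square roots and of a root of each odd-degree polynomial from its coefficients.

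The one step worth tightening is the sign-detection issue in your bisection. As you note, the sign of $p(m)$ at a rational $m$ is semi-decidable but not decidable from the coefficient oracle, and ``if no sign appears at the current precision, perturb $m$'' is not yet an algorithm: the perturbed point may again land on a root, and there is no uniform bound on how long you must wait before concluding a sign will never appear. The standard repair is to dovetail: on the current interval $[a,b]$ with $p(a)\cdot p(b)<0$ known to definite precision, fix $d+1$ rationals $m_1,\dots,m_{d+1}$ spaced in the middle third of $[a,b]$ (where $d=\deg p$), and run the sign computations for all $p(m_i)$ in parallel with increasing precision. Since $p$ has at most $d$ real roots, some $m_i$ is not a root, so eventually one computation returns a definite sign, at which point you shrink to the corresponding subinterval (whose length is at most a fixed fraction of $b-a$). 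This makes the procedure genuinely effective and uniform in the coefficients, completing the argument that the root lies in $S$. Alternatively, one can cite the Ershov--Madison theorem that the real closure of a computable ordered field is computable, uniformly in the presentation; your bisection is in effect an ad hoc proof of the special case needed here.
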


\subsection{Some valuation theoretic notions}
\label{DOAG}

We summarize the required background (see \cite{book} and \cite{sgr}). Let  $(G, +, 0, <)$ be a divisible  ordered  abelian group.  Given $A\subset G$, we let $\divhull{A}$ denote the smallest divisible ordered subgroup of $G$ containing $A$.   For any $x\in G$ let $|x|=\max \{x,-x\}$.  For non-zero $x, y \in G$ we define $x\sim y$ if there exists $n \in \N$ such that $n|x| \geq |y| $ and $ n|y| \geq |x|. $ We write $x<<y$ if $n|x| < |y|$ for all $n \in \N$. Clearly,  $\sim$ is an equivalence relation, and we let $[x]$ denote the equivalence class of any non-zero $x\in G$.
Let $\Gamma := G-\{ 0\}/\sim = \{[x] : x \in G-\{ 0\} \}$. We can define an order on $<_{\Gamma}$ in terms of $<<$  as follows, $[y]\, <_{\Gamma} [x] $ if  $x <<y$ (notice the reversed order).    Given a linear ordering $(A, <)$ and $A_1, A_2\subset A$, we use the notation $A_1<A_2$ to indicate that $a_1<a_2$ for all  $a_1\in A_1$ and $a_2\in A_2$.
\begin{fact}
{\it (a)} $\Gamma$ is a totally ordered set under $<_{\Gamma}$, and we will refer to it as the value set of $G$.

\noindent
{\it (b)} The map
\begin{align*}
v \colon &G\  \longrightarrow\ \Gamma \cup \{\infty\} \\
&0\quad \mapsto\quad \infty \\
&x\quad \mapsto\quad [x] \quad (\mbox{if }x \neq 0)
\end{align*}
is a valuation on $G$ as a $\Z$-module, i.e. for every $x, y \in G$: \newline $v(x) = \infty$  if and only if  $x = 0$,  $v(nx) = v(x)$  for all $n \in \Z$, $n \neq 0$, and $v(x+y) \geq \min\{v(x), v(y)\}$.

\vs
\item[$(c)$] For every $\gamma \in \Gamma$  the  Archimedean component associated to $\gamma$ is the maximal Archimedean subgroup of $G$ containing some $x\in \gamma$. We denote it by $A_{\gamma}$.  For each $\gamma$, $A_{\gamma}$ is isomorphic to an ordered subgroup of $(\mathbb R, +,0,<).$
Furthermore, we can calculate the isomorphism type of $A_\gamma$ in terms of any $x\in\gamma$.  Given \mbox{$x, y\in \gamma$,} we let $\frac{y}{x}=\sup\{r\in\mathbb{Q}\mid rx<y\}$, and let \mbox{$A_{\gamma, x}=\{\frac{y}{x}\mid y\in\gamma\}\cup\{0\}$.}   Then,  $A_\gamma\cong A_{\gamma, x}.$

\vs
\item[$(d)$]  Since $G$ is a divisible abelian group, we may view $G$ as a vector space over $\mathbb{Q}$. We focus on the case where $G$ is finite dimensional as a vector space, as we will use the next notion of valuation independence exclusively in that context.   A set $\{g_1, \ldots, g_n\}\subset G$ is called {\em valuation independent} if for all $q_1, \ldots q_n\in \mathbb{Q}$,
\begin{equation*}
v(q_1g_1+\ldots+q_n g_n)=\min\{ v(g_i)\mid q_i\not=0\}
\end{equation*}
A basis $\{g_1, \ldots, g_n\}$ for $G$ is called a {\em valuation basis} if it is a valuation independent set.
A theorem by Brown \cite{br} states that every vector space  of countable dimension with a valuation admits a valuation basis.
\end{fact}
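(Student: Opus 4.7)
The plan is to verify parts (a), (b), and (c) by direct computation from the definitions of $\sim$ and $<<$; part (d) consists of a definition together with a citation of Brown's theorem and requires no separate argument.

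For (a), the first step is to show that $x<<y$ depends only on the $\sim$-classes of $x$ and $y$, so that $<_\Gamma$ is well defined on $\Gamma$: if $x\sim x'$ and $x<<y$, then absorbing the multiplicative constant between $|x|$ and $|x'|$ into the quantifier ``for all $n$'' yields $x'<<y$, and symmetrically in the second variable. Irreflexivity of $<_\Gamma$ is immediate from $1\cdot|x|\geq|x|$, and transitivity follows by chaining the defining inequalities of $<<$. Totality reduces to the following trichotomy: for nonzero $x,y$ with $x\not\sim y$ at least one of the two defining inequalities of $\sim$ fails, forcing $x<<y$ or $y<<x$; both cannot hold simultaneously since $|x|,|y|>0$.

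For (b), the first valuation axiom is definitional. For $v(nx)=v(x)$ with $n\neq 0$, the inequalities $|n|\cdot|x|\geq|nx|$ and $|n|\cdot|nx|\geq|x|$ witness $nx\sim x$. The ultrametric inequality is the substantive step: assuming without loss of generality that $v(x)\leq v(y)$, the trichotomy from (a) gives either $y=0$ (trivial) or $n|x|\geq|y|$ for some $n\in\N$, whence $|x+y|\leq(n+1)|x|$. This forces $x+y$ to be $0$, $\sim$-equivalent to $x$, or $<<x$, so in every case $v(x+y)\geq v(x)=\min\{v(x),v(y)\}$.

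For (c), fix $x\in\gamma$ and take $A_\gamma$ to be a maximal Archimedean divisible subgroup of $G$ containing $x$, produced by applying Zorn's lemma to the poset of Archimedean subgroups of $G$ containing $x$ (unions of chains remain Archimedean). To realize the embedding into $(\R,+,<)$ explicitly and simultaneously compute the isomorphism type, I would define $\varphi\colon A_\gamma\to\R$ by $\varphi(y)=y/x=\sup\{r\in\Q:rx<y\}$ for $y\neq 0$ and $\varphi(0)=0$. The supremum exists in $\R$ because any nonzero $y\in A_\gamma$ is $\sim$-equivalent to $x$, providing integer bounds on $\varphi(y)$. Order preservation of $\varphi$ is immediate. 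The key technical point is additivity: one direction, $\varphi(y_1)+\varphi(y_2)\leq\varphi(y_1+y_2)$, follows from summing strict rational lower bounds, while the reverse inequality needs a careful approximation-from-above argument. The cleanest route is to pass to the quotient by the convex subgroup $\{z\in G:z<<x\}$ (which is a convex subgroup by divisibility of $G$), in which the $y/x$ construction automatically factors as a well-defined group homomorphism into $\R$. Once $\varphi$ is shown to be an order-preserving group embedding, its image is visibly $\{y/x:y\in\gamma\}\cup\{0\}=A_{\gamma,x}$, yielding $A_\gamma\cong A_{\gamma,x}$. The principal obstacle throughout is this additivity step; everything else is bookkeeping.
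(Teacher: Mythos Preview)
The paper does not prove this statement: it is recorded as a background \emph{Fact} with references to \cite{book} and \cite{sgr}, so there is no argument in the paper to compare against. Your verifications of (a) and (b) are correct and standard.

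In (c) there is a genuine gap. Having obtained via Zorn's lemma a maximal Archimedean subgroup $A_\gamma$ containing $x$, you assert that the image of $\varphi\colon A_\gamma\to\mathbb{R}$ is ``visibly'' $\{y/x:y\in\gamma\}\cup\{0\}=A_{\gamma,x}$. Only the inclusion $\varphi(A_\gamma)\subseteq A_{\gamma,x}$ is visible, since every nonzero element of $A_\gamma$ lies in $\gamma$. The reverse inclusion is not: an arbitrary $y\in\gamma$ need not lie in $A_\gamma$. For instance, in $G=\mathbb{R}\times\mathbb{R}$ ordered lexicographically with $x=(1,0)$, the element $(1,1)$ lies in $\gamma$ but belongs to no Archimedean subgroup containing $x$, since any such subgroup would contain $(0,1)\ll x$. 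Establishing surjectivity onto $A_{\gamma,x}$ genuinely requires maximality, and the argument (adjoin $y_0\in\gamma$ with $y_0/x\notin\varphi(A_\gamma)$ and show the enlarged group is still Archimedean) needs additivity of $y\mapsto y/x$ on elements outside $A_\gamma$, which you have not yet secured at that point.

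Your quotient remark is the right tool, but you have plugged it in only to prove additivity on $A_\gamma$. The clean organization---and the one used in the cited references---is to dispense with Zorn entirely and \emph{define} $A_\gamma$ as the quotient $G^{\gamma}/G_{\gamma}$, where $G^{\gamma}=\{z:v(z)\geq\gamma\}$ and $G_{\gamma}=\{z:v(z)>\gamma\}$. This quotient is Archimedean, and $y\mapsto y/x$ descends to an order-preserving group isomorphism $G^{\gamma}/G_{\gamma}\to A_{\gamma,x}$, giving both additivity and surjectivity at once. (Incidentally, $G_\gamma$ is a convex subgroup regardless of divisibility, and the map $y\mapsto y/x$ is only well defined on $G^{\gamma}$, not on all of $G$ as your phrasing suggests.)
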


\begin{dfn}
Let $\lambda$ be an infinite ordinal. A sequence $(a_{\rho})_{ \rho < \lambda}$  contained in $G$  is said to be {\it pseudo Cauchy} (or {\it pseudo convergent}) if for every
$\rho < \sigma < \tau<\lambda$ we have
\[
v(a_{\sigma} - a_{\rho})\ <\ v(a_{\tau} - a_{\sigma}).
\]

\end{dfn}

\begin{fact}
If $(a_{\rho})_{\rho<\lambda}$ is pseudo Cauchy sequence then for all $\rho < \sigma<\lambda$ we have
\[
v(a_{\sigma} - a_{\rho}) = v(a_{\rho + 1} - a_{\rho}).
\]
\end{fact}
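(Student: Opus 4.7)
The plan is to fix $\rho < \sigma < \lambda$ and reduce the problem to the ultrametric-type property of the valuation $v$ established in part (b) of the earlier Fact, namely $v(x+y) \geq \min\{v(x), v(y)\}$.

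First I would dispose of the trivial case $\sigma = \rho + 1$, where there is nothing to show. So assume $\sigma > \rho + 1$, which in particular means $\rho < \rho+1 < \sigma$. The key algebraic decomposition is
\[
a_{\sigma} - a_{\rho} \;=\; (a_{\sigma} - a_{\rho+1}) \;+\; (a_{\rho+1} - a_{\rho}).
\]
Applying the definition of pseudo Cauchy to the triple $\rho < \rho+1 < \sigma$ gives
\[
v(a_{\rho+1} - a_{\rho}) \;<\; v(a_{\sigma} - a_{\rho+1}),
\]
so the two summands on the right-hand side of the decomposition have strictly different valuations, with the summand $a_{\rho+1} - a_{\rho}$ having the smaller valuation.

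Next I would invoke the standard consequence of the ultrametric inequality: if $v(x) \neq v(y)$, then $v(x+y) = \min\{v(x), v(y)\}$. This is itself a short deduction from $v(x+y) \geq \min\{v(x), v(y)\}$: if, say, $v(x) < v(y)$ and one had $v(x+y) > v(x)$, then writing $x = (x+y) - y$ and applying the inequality would force $v(x) \geq \min\{v(x+y), v(y)\} > v(x)$, a contradiction. Applied to our decomposition, this yields exactly
\[
v(a_{\sigma} - a_{\rho}) \;=\; \min\bigl\{\,v(a_{\sigma}-a_{\rho+1}),\; v(a_{\rho+1}-a_{\rho})\,\bigr\} \;=\; v(a_{\rho+1}-a_{\rho}),
\]
which is the claim.

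There is no real obstacle here; the statement is essentially the ``dominant term'' phenomenon for valuations, packaged for pseudo Cauchy sequences. The only subtlety worth flagging is the need for the strict form $v(x+y) = \min\{v(x),v(y)\}$ (valid when the summands have unequal valuation) rather than the weak inequality, and verifying that the pseudo Cauchy condition supplies precisely this strict inequality when we split off the first increment $a_{\rho+1} - a_{\rho}$.
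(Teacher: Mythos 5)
Your proof is correct. The paper states this Fact without proof (it is treated as a standard background result), and the argument you give is precisely the standard one: split off the first increment $a_{\rho+1}-a_{\rho}$, use the pseudo Cauchy condition on the triple $\rho<\rho+1<\sigma$ to see the two summands have distinct valuations, and then apply the sharpened ultrametric equality $v(x+y)=\min\{v(x),v(y)\}$ when $v(x)\neq v(y)$, which you correctly derive from the weak inequality in Fact (b). Nothing is missing.
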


\begin{dfn}
Let $(a_{\rho})_{\rho < \lambda}$  be a pseudo Cauchy sequence in $G$. We say that $x \in G$ is a
{\it pseudo limit} of $S$ if
\[
v(x - a_{\rho}) = v(a_{\sigma} - a_{\rho}) = v(a_{\rho + 1} - a_{\rho})  \quad \mbox{ for all } \rho < \sigma<\lambda.
\]

\end{dfn}

If $(R,+,\cdot ,0,1,<)$ is an ordered field then it has a natural valuation $v$, that is the natural valuation associated with the ordered abelian  group $(R,+ ,0,<~)$. We will denote by $G$ the value group of $R$ with respect to $v$, i.e., $G=v(R)$. If $(R,+,\cdot ,0,1,<)$ is a real closed field then $G$ is  divisible, and we will refer to the  linear dimension of $G$ as a $\mathbb Q$-vector space as the {\em rational rank} of $G$, denoted $\rk (G)$.
 For the natural valuation on $R$, we use the notations \mbox{$\mathcal O_R=\{ r\in R:v(r)\geq 0\}$} and \mbox{$\mu_R =\{ r\in R: v(r)>0\}$} for the valuation ring and the valuation ideal, respectively. The residue field $k$ is the quotient $\mathcal O_R/\mu_R$, and we recall that it is isomorphic to a  unique subfield of $\mathbb R$.  When convenient, we identify $k$ with this unique subfield of $\mathbb{R}$.  Given any $a\in R$ with $v(a)\ge 0$, we denote the residue of $a$ by $\overline{a}\in k$, i.e., $\overline{a}$  is the unique element in $k$ such that $v(a-\overline{a})>0$.
Notice that in the case of ordered fields there is a unique archimedean component up to isomorphism, and if the field is real closed, the archimedean component is the residue field.
\par
 If $R$ is a real closed field, given $X\subset R$, we let $\rc{X}$ denote the real closure of $\mathbb{Q}(X)$ in $R$.
A notion of pseudo Cauchy sequence is easily extended to any ordered field as in the case of ordered abelian groups.

\section{Background on $\kappa$-saturated structures}

We now recall the characterization of $\A_{\alpha}$-saturation for divisible ordered abelian groups given in  \cite{sgr}.  We need the notion of $\eta_\alpha$-sets (see \cite{rosenstein}).   An $\eta_\alpha$-set is a linear ordering $(A, <)$ such that, whenever $A_1, A_2\subset A$ have cardinality less than $\A_\alpha$ and  $A_1<A_2$, then there is an $a\in A$ such that $A_1<a<A_2$.  Observe that an $\eta_0$-set is simply a dense linear ordering without endpoints.

\begin{theo} \cite{sgr}
\label{doag}
Let $G$ be a divisible ordered abelian group, and let $\A_{\alpha}\geq \A_0$. Then $G$ is $\A_{\alpha}$-saturated in the language of ordered groups if and only

\begin{enumerate}[(i)]

\item
the value set of $G$ is an $\eta_{\alpha}$-set,

\item
all the archimedean components of $G$ are isomorphic to $\R$, and

\item
every pseudo Cauchy sequence in a divisible subgroup of $G$ with a value set of cardinality less than $\A_{\alpha}$ has a pseudo limit in $G$.

\end{enumerate}
\end{theo}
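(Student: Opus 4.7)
The plan is to prove both implications. For the forward direction ($G$ is $\aleph_\alpha$-saturated $\Rightarrow$ (i), (ii), (iii)), I will verify each condition by writing down a type over $< \aleph_\alpha$ parameters whose realization witnesses the condition, and then invoke saturation. For the backward direction, I will use quantifier elimination for divisible ordered abelian groups to reduce realization of a type over $A \subset G$ to finding an element with a specified cut in $H := \divhull{A}$, and then classify such cuts by valuation-theoretic data into three cases handled by (iii), (i), (ii) respectively.

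For $(\Rightarrow)$, (i) is obtained by picking representatives in $G$ of $A_1 < A_2 \subset \Gamma$ and realizing the type
\begin{equation*}
\{n|b| < |x| : n \in \N,\ [b] \in A_1\}\ \cup\ \{n|x| < |a| : n \in \N,\ [a] \in A_2\};
\end{equation*}
finite satisfiability follows since, for a finite subtype with $n \leq N$, setting $x = (N+1)|b^*|$ for a $b^*$ of largest $|\cdot|$ in the finite fragment of $A_1$ works, exploiting that $[b^*] <_\Gamma [a]$ gives $m|b^*| < |a|$ for every $m$. For (ii), given $\gamma \in \Gamma$, $0 < x \in \gamma$, and $r \in \R$, I realize $\{qx < y < q'x : q, q' \in \Q, q < r < q'\}$ (finitely satisfiable by a rational midpoint); the realization has $y/x = r$ and $y \sim x$, so $r \in A_{\gamma, x}$, forcing $A_\gamma \cong \R$. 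For (iii), let $(a_\rho)_{\rho < \lambda}$ be pseudo Cauchy with value set of cardinality $< \aleph_\alpha$; since the map $\rho \mapsto v(a_{\rho+1} - a_\rho)$ is strictly increasing, $|\lambda| < \aleph_\alpha$. I realize
\begin{equation*}
\{n|x - a_\rho| < |a_\rho - a_{\rho'}|\ :\ \rho' < \rho < \lambda,\ n \in \N\},
\end{equation*}
which is finitely satisfied by $x = a_{\rho^* + 1}$ (where $\rho^*$ is the largest index appearing), using the Fact that $v(a_{\rho^*+1} - a_\rho) = v(a_{\rho+1} - a_\rho) > v(a_\rho - a_{\rho'})$; one checks any realization is a pseudo limit.

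For $(\Leftarrow)$, let $p(y)$ be a finitely satisfiable type over $A$ with $|A| < \aleph_\alpha$ and set $H := \divhull{A}$. By quantifier elimination for divisible ordered abelian groups, $p$ is equivalent to a cut $C = (L, U)$ in $H$; assume $C$ is not realized in $H$. I analyze $\Delta := \{v(u - l) : l \in L, u \in U\} \subseteq v(H)$ and split into three cases. \emph{Case A (pseudo limit):} if $\Delta$ is unbounded in $v(H)$, I build a pseudo Cauchy sequence $(a_\rho)_{\rho < \lambda} \subset H$ with $\lambda \leq |v(H)| < \aleph_\alpha$ whose elements straddle $C$ in the sense that $a_\rho$ is eventually above each $l \in L$ and below each $u \in U$; by (iii), a pseudo limit $y \in G$ exists and realizes $C$. \emph{Case B (new value):} if $\Delta$ is bounded in $v(H)$ and the induced cut in $v(H)$ is not attained in $v(H)$, apply (i) to that cut (whose sides have size $< \aleph_\alpha$) to produce $z \in G$ with $v(z)$ at the right place, and set $y := h_0 + z$ for a well-chosen $h_0 \in H$ approximating $C$. \emph{Case C (new residue):} otherwise $\sup \Delta = \delta \in v(H)$, and realizing $C$ reduces to prescribing the ratio of $y - h_0$ to any $h_1 \in H$ with $v(h_1) = \delta$, i.e.\ specifying a real in $A_{\delta, h_1}$; by (ii), $A_\delta \cong \R$ supplies any such real, yielding $y \in G$.

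The main obstacle is the backward direction, specifically verifying exhaustiveness of the trichotomy and that in each case the constructed $y$ actually lies in the cut $(L, U)$. Case A requires choosing the pseudo Cauchy sequence so that it is genuinely cofinal with the cut on both sides and not merely valuation-convergent; Cases B and C require the choice of $h_0 \in H$ to approximate $C$ finely enough that the small correction (of valuation $\delta$ or of a new value) neither overshoots $U$ nor falls back into $L$, which ultimately rests on $v(y - h_0)$ being strictly larger than $v(u - l)$ for all $l \in L, u \in U$. The forward direction, by contrast, reduces each condition cleanly to a single saturation argument.
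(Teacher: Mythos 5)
The paper gives no proof of Theorem~\ref{doag} itself (it is cited from \cite{sgr}); the method is visible only through the parallel argument for Theorem~\ref{recsatG}, which proceeds by quantifier elimination followed by the trichotomy into Immediate/Residue/Group Transcendental cases. Your proposal reconstructs exactly that strategy, with your $\Delta = \{v(u-l) : l\in L,\ u\in U\}$ playing the role of the paper's $\{v(d - x_0) : d\in \divhull{A}\}$; the worries you flag (that the correction term not overshoot, that the trichotomy be exhaustive) are precisely what the analogues of Claims~\ref{restransGclaim} and~\ref{GrpTransClaim} are for, so the backward direction is sound in outline.

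One local error in the necessity of (i): you use the \emph{unreversed} order on $\Gamma$. The paper defines $[y] <_\Gamma [x]$ iff $x \ll y$ (``notice the reversed order''), so $A_1 < A_2$ in $\Gamma$ means that representatives $b$ of classes in $A_1$ are archimedean-\emph{larger} than representatives $a$ of classes in $A_2$; that is, $m|a| < |b|$ for all $m$, not $m|b^*| < |a|$ as you assert. Consequently the type should read $\{n|x| < |b| : [b]\in A_1,\ n\in\N\}\cup\{n|a| < |x| : [a]\in A_2,\ n\in\N\}$, and the finite-satisfiability witness should be a large multiple of the maximal $|a^*|$ drawn from the finite fragment of $A_2$, not from $A_1$. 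As written, both the type and the witness contradict the paper's convention; under your (unreversed) convention the argument is internally consistent, so the fix is a simple swap, but it should be made.
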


Notice that in  the case of $\A_0$-saturation  the necessary and sufficient conditions reduce only to (1) and (2).

\par

The following characterization of $\A_{\alpha}$-saturated real closed fields was obtained in \cite{kkmz}.

\begin{theo} \cite[Theorem 6.2]{kkmz}  \label{rcf}
Let $R$ be a real closed field, $v$ its natural valuation, $G$ its value group and $k$ its residue field.
Let $\A_{\alpha} \geq \A_0$. Then $R$ is $\A_{\alpha}$-saturated in the language of ordered fields if and only if

\begin{enumerate}[(i)]

\item $G$ is $\A_{\alpha}$-saturated,

\item $k \cong \R$,

\item every pseudo Cauchy sequence in a subfield of $R$ of absolute transcendence degree
less than $\A_{\alpha}$ has a pseudo limit in $R$.
\end{enumerate}
\end{theo}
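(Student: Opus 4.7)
The plan is to prove both directions by analyzing $1$-types through quantifier elimination for real closed fields, reducing a $1$-type over a set $A \subseteq R$ to the valuation-theoretic behavior of a realization $x^*$ in some elementary extension $R^* \succeq R$. The Ax--Kochen/Kaplansky trichotomy for one-variable extensions of valued fields then drives the backward direction, which carries the technical weight.

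For the forward direction, assume $R$ is $\A_\alpha$-saturated. For $(i)$, the value group $G$ is interpretable in $R$ as the quotient of $R \setminus \{0\}$ by the definable relation $\sim$, so $\A_\alpha$-saturation transfers to $G$. For $(ii)$, $k$ is Archimedean and embeds into $\R$; if some $r \in \R$ were not in the image, its Dedekind cut over $k$ would pull back to a countable, finitely satisfiable $1$-type over $\mathcal{O}_R$ with no realization in $R$, contradicting saturation. For $(iii)$, given a pseudo-Cauchy sequence $(a_\rho)_{\rho<\lambda}$ in a subfield $F \subseteq R$ of absolute transcendence degree less than $\A_\alpha$, the partial type $\{v(x - a_\rho) = v(a_{\rho+1} - a_\rho) : \rho < \lambda\}$ uses fewer than $\A_\alpha$ parameters, is finitely satisfiable by Fact 2.5, and any realization is a pseudo limit.

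For the backward direction, let $\tau(x)$ be a $1$-type over $A \subseteq R$ with $|A| < \A_\alpha$, finitely satisfiable in $R$. Put $F = \rc{A}$, so $F$ still has absolute transcendence degree less than $\A_\alpha$. Realize $\tau$ by $x^*$ in some elementary extension $R^* \succeq R$; by quantifier elimination $\tau$ is determined by the cut of $x^*$ over $F$, hence by the valuation-theoretic data of $x^* - F$. Apply the standard trichotomy. Case (a): if $v(x^* - f) \notin v(F)$ for some $f \in F$, use $(i)$ to find $c \in R$ with the requisite value and take $f + c$ as a realization. Case (b): if $v(x^* - F) \subseteq v(F)$ but the residue $\overline{(x^* - f)/c}$ for suitable $f, c \in F$ lies outside the residue image of $F$, use $(ii)$ together with the Archimedean character of residue fields (which forces the residue field of $R^*$ to coincide with $k \cong \R$) to find $u \in \mathcal{O}_R$ with the correct residue, and take $f + c u$. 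Case (c), the immediate case: extract a pseudo-Cauchy sequence $(a_\rho)$ in $F$ of length at most $|F| < \A_\alpha$ with pseudo limit $x^*$, apply $(iii)$ to obtain a pseudo limit $y \in R$, and invoke Kaplansky's uniqueness principle to conclude $y$ realizes $\tau$.

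The main obstacle is the immediate case. Extracting a pseudo-Cauchy sequence in $F$ with small enough cofinality and with $x^*$ as pseudo limit requires careful analysis of the cofinality of $v(x^* - F)$ in $v(F)$, and verifying that any pseudo limit $y \in R$ realizes the full type $\tau$ (not merely the pseudo-convergence conditions) relies on the characteristic-zero Kaplansky principle that two pseudo limits of a pseudo-Cauchy sequence of transcendental type are interchangeable over the base field.
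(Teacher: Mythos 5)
This theorem is cited from \cite{kkmz} (Theorem~6.2 there); the present paper does not prove it, so there is no in-paper proof to compare against directly. Your sketch does follow the general architecture one finds in \cite{kkmz} and in the analogous recursive-saturation arguments of Sections~4--5 of this paper: quantifier elimination reduces everything to cuts, and the realization $x_0$ of a type in an elementary extension is classified into group-transcendental, residue-transcendental, and immediate-transcendental cases. That part is sound.

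However, there is a genuine error in your forward direction for condition~(i). The value group $G$ is \emph{not} interpretable in $R$ in the first-order sense: the Archimedean equivalence $\sim$ is an infinite disjunction $\bigvee_{n}\bigl(|x|\le n|y|\wedge|y|\le n|x|\bigr)$, and the valuation ring $\mathcal{O}_R$ is a proper convex subring, which by o-minimality cannot be $\{0,1\}$-definable in a nonarchimedean real closed field. Hence $\A_\alpha$-saturation does not ``transfer'' to $G$ via interpretation. What actually works (and is precisely what the present paper does in Section~5 for recursive saturation, and what you would need here) is to fix a multiplicative section for the value group over the parameters, replace each group-language formula in a given $G$-type by a \emph{countable family} of ordered-field formulas expressing the same cut via the section, and then observe that a type of size $<\A_\alpha$ over $G$ translates into a type of size $\A_0\cdot\kappa=\kappa<\A_\alpha$ over $R$ (here $\A_\alpha\ge\A_0$ is essential). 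The same caveat applies to your condition~(iii): the conditions $v(x-a_\rho)=v(a_{\rho+1}-a_\rho)$ must be unwound into the first-order schemes $n|x-a_\sigma|<|x-a_\rho|$ for $\rho<\sigma$ and $n\in\N$.

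Two further remarks. First, your sketch omits the \emph{dimension inequality} (bounding the rational rank of the value group and the residue transcendence degree of a finitely generated extension by its transcendence degree), which the paper explicitly notes is ``crucially used'' in the $\A_0$ case: it is what guarantees that over a finitely generated base the relevant value set $\Delta=\{v(d-x_0):d\in F\}$ and residue extensions stay finite, so that the trichotomy behaves correctly without condition~(iii) degenerating. Second, in the immediate case you correctly identify that showing a pseudo limit $y\in R$ realizes the \emph{full} cut over $F$ (not merely the pseudo-convergence conditions) is the delicate point; the standard fix is to choose the pseudo-Cauchy sequence cofinal in $\Delta$ so that for every $f\in F$ some tail of the sequence lies on the same side of $f$ as $x_0$, whence so does $y$. ``Kaplansky uniqueness'' as usually stated is about immediate extensions as valued fields and does not by itself give you agreement of the order cut; you need the cofinality argument.
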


 In the proof of Theorem \ref{rcf} the {\it dimension inequality} (see \cite{prestel}) is crucially used in the case of  $\A_0$-saturation. This says that the rational rank of the value group of a finite transcendental extension of a real closed field is bounded by the transcendence degree of the extension.

\section{Recursively saturated divisible ordered abelian groups}
Harnik and Ressayre  state the following result in \cite{hr} and
sketch a proof just for the necessity of condition (ii). We include
here a complete proof.
\begin{theo}\label{recsatG}
Let $G$ be a divisible ordered abelian group. Then $G$ is recursively saturated in the language of ordered groups  if and only if
\begin{enumerate}[(i)]
\item\label{GDLO} the value set of $G$ is  an $\eta_{0}$-set,  and
\item\label{GSS}  all  archimedean components of $G$ equal a common Scott set $S$.
\end{enumerate}
\end{theo}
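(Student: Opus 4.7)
For necessity, assume $G$ is recursively saturated. Condition~(\ref{GDLO}) follows by realizing natural computable types that force $\Gamma$ to be dense without endpoints: for instance, given $a, b \in G$ with $|b| \ll |a|$, the type $\{n|b| < x : n \in \N\} \cup \{n|x| < |a| : n \in \N\}$ is finitely satisfiable in $G$ by divisibility, and its realization $c$ witnesses $[b] < [c] < [a]$; analogous types produce arbitrarily large and arbitrarily small archimedean classes.

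For~(\ref{GSS}), the central observation is that $A_{\gamma, a} = A_{\gamma', b}$ as subsets of $\R$ for any $\gamma, \gamma' \in \Gamma$ and any $a \in \gamma$, $b \in \gamma'$. Given $r = c/a \in A_{\gamma, a}$ with $c \in \gamma$, the computable type over $a, b, c$ consisting, for each $(p, q) \in \Z \times \N_{>0}$, of the formula $\operatorname{sign}(pa - qc) = \operatorname{sign}(pb - qx)$ is finitely satisfiable (take $x = sb$ for a rational $s$ close enough to $r$), and any realization forces $x/b = r$, placing $r \in A_{\gamma', b}$. Specialising $\gamma = \gamma'$ shows $A_{\gamma, a}$ is independent of $a \in \gamma$, which forces the common subset $S := A_\gamma \subset \R$ to be a subfield. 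To show $S$ is a Scott set: for computable closure, given $r_i = c_i/a \in S$ and $r \leq_T \bar r$ via a computable functional $\Phi$, realize the computable type over $a, \bar c$ asserting, for each $n$ and each finite bit pattern $\bar\sigma$ on which $\Phi$ decides the $n$-th bit, that ``if the first bits of $\bar c/a$ match $\bar\sigma$, then the $n$-th bit of $x/a$ equals $\Phi(\bar\sigma)(n)$'' (each bit of a ratio being first-order definable as a finite disjunction of linear inequalities). For the tree property, given an infinite tree $\mathcal{T}$ computable in $r = c/a \in S$, first invoke computable closure to obtain $c' \in \gamma$ with $c'/a = \chi_{\mathcal{T}}$; then realize the computable type over $a, c'$ whose $n$-th formula asserts that some $\sigma \in 2^n \cap \mathcal{T}$ is an initial segment of $x/a$, with ``$\sigma \in \mathcal{T}$'' unpacked as a first-order condition on the bits of $c'/a$; finite satisfiability follows from K\"onig's lemma, and the realization yields $x/a \in S$ whose binary expansion is a path of $\mathcal{T}$.

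For sufficiency, assume~(\ref{GDLO}) and~(\ref{GSS}), and let $\tau(x, \bar v)$ be a computable type over parameters $\bar a$ finitely satisfiable in $G$. By quantifier elimination for divisible ordered abelian groups, each formula of $\tau$ reduces to a Boolean combination of rational linear (in)equalities on $x$, so $\tau$ determines a cut of $x$ over the finite-dimensional divisible hull $D = \divhull{\bar a}$. Classifying by the width $\gamma_0 := \sup_{d \in D} v(x - d)$: if $\gamma_0 = \infty$, $\tau$ forces $x \in D$ and is realized uniquely; if $\gamma_0 \in \Gamma \setminus v(D)$, condition~(\ref{GDLO}) and the finite cut of $\gamma_0$ in $v(D)$ give $y \in G$ with $v(y) = \gamma_0$ of the correct sign, and $x = d_0 + y$; if $\gamma_0 = v(d_1)$ for some $d_1 \in D$, the realization is parametrized by the residue $r := \overline{(x - d_0)/d_1} \in S$. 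The main obstacle is this last case, exhibiting $r \in S$ consistent with $\tau$. Because the diagram of $\bar a$ in $G$ is determined by the order of $v(\bar a)$ in $\Gamma$ together with the finitely many archimedean ratios $a_i/a_j \in S$, the theory $\operatorname{Th}(G, \bar a)$ is decidable in $S$. The binary tree whose nodes are those $\sigma \in 2^{<\omega}$ for which $\tau \cup \{\overline{(x - d_0)/d_1} \in [k_\sigma/2^{|\sigma|}, (k_\sigma + 1)/2^{|\sigma|})\}$ is finitely satisfiable in $G$ (with $k_\sigma$ the binary value of $\sigma$) is therefore computable in $S$ and infinite by finite satisfiability of $\tau$. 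The tree property of $S$ yields a path $r \in S$, and~(\ref{GSS}) delivers $y \in \gamma_0$ with $\overline{y/d_1} = r$; then $x := d_0 + y$ realizes $\tau$.
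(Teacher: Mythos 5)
Your necessity argument matches the paper's strategy closely: produce computable types whose realizations witness density of the value set (and, as you note, absence of endpoints, which the paper leaves implicit); show $A_{\gamma,a}$ is independent of $\gamma$ and of the representative $a$ by realizing a transfer type; and establish the two Scott-set clauses by encoding Turing functionals and binary trees into computable types. Your bit-level encoding in place of the paper's rational-interval map $\sigma\mapsto I_\sigma$ is workable modulo the usual care at dyadic rationals, and the omitted reverse inclusion $A_{\gamma',b}\subseteq A_{\gamma,a}$ is the symmetric argument the paper also waves at.

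Your sufficiency argument follows the same case split (immediate / residue / group transcendental over $D=\divhull{\bar a}$), and coding the residue cut directly into an $S$-computable tree is a reasonable merger of the paper's two separate steps (first complete the type via one tree, then fill the cut). However, there is a genuine gap where you assert that ``the diagram of $\bar a$ in $G$ is determined by the order of $v(\bar a)$ in $\Gamma$ together with the finitely many archimedean ratios $a_i/a_j\in S$.'' This is false when $\bar a$ is not valuation independent. Take $G=\R\oplus\R$ lexicographically and compare $a_1=(1,0),\ a_2=(1,1),\ a_3=(1,-1)$ with $a_1=(1,0),\ a_2'=a_3'=(1,1)$: all three elements have the same value, every pairwise ratio equals $1$ in both tuples, yet in the first tuple $a_2-a_3>0$ and $2a_1-a_2-a_3=0$, while in the second $a_2'-a_3'=0$ and $2a_1-a_2'-a_3'<0$. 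So the quantifier-free type of $\bar a$ is not recoverable from that data, and the $S$-computability of $\operatorname{Th}(G,\bar a)$ --- which you need to make the residue-case tree $S$-computable --- is not established. The paper closes exactly this gap by first replacing $\bar a$ with a valuation basis of $\divhull{\bar a}$ (this exists by Brown's theorem and the substitution can be done effectively because the hull is finite-dimensional) and rewriting $\tau'$ over the new parameters; once the parameters are valuation independent, the sign of any $\Z$-linear combination is read off from its leading archimedean class and the ratios $r_{j_i}$ there, and the whole diagram becomes computable from the join $r'\in S$. You need this preprocessing step for your residue-case tree to be computable in an element of $S$. (A small notational point: in the residue case you write $\overline{y/d_1}$ as if $G$ carried a residue map; in the group setting the correct object is the ratio $\frac{y}{d_1}\in A_{[d_1]}=S$.)
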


\begin{proof}
 Suppose ${G}$ is recursively saturated.  We show that (\ref{GDLO}) and (\ref{GSS}) hold.

 \noindent
 (\ref{GDLO})   Let $g, g'\in G$ such that $g, g'>0$ and  $v(g)< v(g')$.  The partial type
 \begin{equation*}
 \beta(x, g, g')=\{ng'<x \mid n\in\mathbb{N}\}\cup \{x <ng\mid n\in\mathbb{N}\}
 \end{equation*}
is computable and finitely satisfiable (since $v(g)< v(g')$).  By recursive saturation, there is some $h\in G$ such that  $\beta(h, g, g')$ holds in $G$, and $v(g)< v(h)< v(g')$.

\noindent
(\ref{GSS}) We first show that $A_{[g], g}=A_{[g'], g'}$ for all nonzero $g, g'\in G$.  Let $r\in A_{[g], g}$.  Let $\delta(x, y, g, g')$ be the partial type  consisting of all formulas with $q, q'\in\mathbb{Q}$ and $q<q'$ of the form
\begin{equation*}
qg<y< q' g \rightarrow {q}g'<x<{q}'g'.
\end{equation*}
Since $r\in A_{[g], g}$, there exists some $h\in G$ so that $\frac{h}{g}=r$.  The set of formulas $\delta(x, h, g, g')$ is computable and finitely satisfiable in ${G}$ since ${G}$ is divisible, so there is some $h'\in G$ so that  $\delta(h', h, g, g')$ holds in $G$.  Then $\frac{h'}{g'}=r$, so $r\in A_{[g'], g'}$.  We have that  $A_{[g], g}=A_{[g'], g'}$ by a symmetric argument.  Hence, it is well defined to   refer to $A_{[g], g}$ simply as $A_{[g]}$.

Let $g\in G$.  We show that $A_{[g]}$ is a Scott set.  Suppose \mbox{$r_1, \ldots, r_n\in A_{[g]}$} and $r\in \mathbb{R}$ is computable in $r_1, \ldots, r_n$ via some Turing reduction $\Psi$.  Take $g_i\in G$ such that $r_i=\frac{g_i}{g}$ for $1\le i\le n$.  For each $n$-tuple of pairs of rationals $(q_i< q'_i)_{i=1}^n$, each stage $s\in\mathbb{N}$, and another pair of rationals $\hat{p}<\hat{p}'$, compute whether $\Psi$, using only the knowledge that arbitrary input reals $\tilde{r}_1, \ldots, \tilde{r}_n$ satisfy  $q_i<\tilde{r}_i< q'_i$ for $1\le i\le n$, halts in $s$ steps and outputs a real between $p$ and $p'$.  If $\Psi$ halts in this situation, enumerate the  formula
\begin{equation*}
(q_1g<g_1< q'_1 g \ \wedge \ldots \wedge \ q_ng<g_n< q'_n) \rightarrow pg<x<p'g
\end{equation*}
into the partial type $\zeta(x, g_1, \ldots, g_n)$.  The partial type $\zeta(x, g_1, \ldots, g_n)$ is computably enumerable and finitely satisfiable since ${G}$ is divisible.  By recursive saturation, there is some $h\in G$ such that  $\zeta(h, g_1, \ldots, g_n)$ holds in $G$.  Since $\Psi$ in fact computes $r$ from $r_1, \ldots, r_n$, we have that $r=\frac{h}{g}\in A_{[g]}$, as desired.

Let  $\mathcal{T}\subset 2^{<\omega}$ be an infinite tree computable in some $r\in A_{[g]}$.  We show that  $\mathcal{T}$ has a path computable in some $r'\in A_{[g]}$.  Fix a computable function
\begin{eqnarray}\label{treefcn}
f: 2^{<\omega} & \longrightarrow & \{(a, b)\in\mathbb{Q}^2\mid a<b\}\\
\sigma & \longrightarrow & I_\sigma=(a_\sigma, b_\sigma)
\end{eqnarray}
satisfying the following properties for $\sigma, \tau\in2^{<\omega}$:
   \begin{enumerate}[(a)]
\item $b_\sigma-a_\sigma= 2^{-length(\sigma)}$,
\item $I_\sigma\cap I_\tau=\emptyset$ if $\sigma\not\prec \tau$ and $\sigma\not\succ\tau$, and
\item $I_\sigma\subset I_\tau$ if $\sigma\succ\tau$.
\end{enumerate}

Let $\mathcal{T}\subset 2^{<\omega}$ be a tree that is computable from some $r\in A_{[g]}$ via a Turing reduction $\Lambda$.  Let $\mathcal{T}(k)$ be the set of nodes in $\mathcal{T}$ of length $k\in\omega$, and let ${I_{\mathcal{T}(k)}=\cup_{\sigma\in \mathcal{T}(k)} \, I_\sigma}.$  Fix some nonzero $g\in G$. Since $r\in A_{[g]}$, there is some $h\in G$ such that $r=\frac{h}{g}$.

 For each  pair of rationals $q< q'$ and  $s, k\in\mathbb{N}$, compute whether Turing reduction $\Lambda$, using only the information that an arbitrary input real $\tilde{r}$ satisfies  $q<\tilde{r}< q'$, halts in $s$ steps and outputs a (finite) set of nodes $A\subset 2^k$.   If $\Lambda$ halts in this situation, enumerate the  formula
\begin{equation*}
qg<h< q' g  \rightarrow \frac{x}{g}\in \cup_{\sigma\in A}\, I_{\sigma}
\end{equation*}
into the partial type $\kappa(x, {h}, g)$.  Note that $\frac{x}{g}\in \cup_{\sigma\in A}\, I_{\sigma}$ can be expressed as a quantifier free formula in the language of divisible ordered groups.  The partial type $\kappa(x, {h}, g)$ is computably enumerable and finitely satisfiable since ${G}$ is divisible.  By recursive saturation, there is some $h'\in G$ such that $\kappa(h', h, g)$ holds in $G$.   Since $\Lambda$  computes $\mathcal{T}$ from $r$, for each $k\in\mathbb{N}$, there is some stage $s\in\mathbb{N}$ such that $\Lambda$ computes $\mathcal{T}(k)$ from $r$.  Hence, for each $k\in\mathbb{N}$, there is some $q, q'\in\mathbb{Q}$ with $r\in (q, q')$ such that the formula
\begin{equation*}
qg<h< q' g  \rightarrow \frac{x}{g}\in \cup_{\sigma\in \mathcal{T}(k)}\, I_{\sigma}
\end{equation*}
is in $\kappa(x, h, g)$.  Since $r=\frac{h}{g}$, the real  $r'=\frac{h'}{g}\in A_{[g]}$ is in  $\cup_{\sigma\in \mathcal{T}(k)} \, I_{\sigma}$ for all $k\in\mathbb{N}$.  Then, the set $\mathcal{P}=\{\sigma\in \mathcal{T} \mid r'\in I_{\sigma}\}$ is a path through $\mathcal{T}$.  Moreover, $\mathcal{P}$ is computable from $r'$ since the assignment function $f$ of nodes to intervals is a computable function and if $r'\in I_{\tau}$ for some $\tau\in\mathcal{T}$, then $r'$ is in  $I_{\tau0}$ or $I_{\tau1}$ and it is computable to determine which one.   We have finished showing that $A_{[g]}$ is a Scott set, and the necessity portion of the theorem.

Now, let $G$ be a divisible ordered abelian group.  We  show that if $G$ satisfies (\ref{GDLO}) and (\ref{GSS}), then $G$ is recursively saturated.  Let $S\subset\mathbb{R}$ be the Scott set such that $S=A_{[g], g}$ for all $g\in G$ by (\ref{GSS}).  The proof follows the structure of the proof of Theorem \ref{doag} for the case of  $\aleph_0$-saturation.  The proof differs, however, in that its most  interesting aspect  is finding (using $S$) a complete extension of the  given computable partial type  that is finitely satisfiable with the given parameters.

Let $\bar{g}=(g_1, \ldots, g_n)$ be an $n$-tuple from $G$.  Let $\tau'(x, \bar{y})$ be a computable partial type so that $\tau'(x, \bar{g})$ is finitely satisfiable in $G$.  We first extend $\tau'(x, \bar{y})$ to a complete type $\tau(x, \bar{y})$ so that $\tau(x, \bar{g})$ is also finitely satisfiable in $G$.  We define an intermediate extension $\tau''(x, \bar{y})$ of $\tau'(x, \bar{y})$ first.

 Set ${G}'=\divhull{\bar{g}}$. We may assume that $\bar{g}=(g_1, \ldots, g_n)$ is a valuation basis for $G'$. Otherwise, we could replace the parameters $\bar{g}$ by a valuation basis $\bar{g}'$ by  substituting every occurrence of $g_i$ in $\tau'(x, \bar{g})$ with its definition over   $\bar{g}'$ in an effective way.  Similarly, we may assume that $0<g_1<g_2<\ldots <g_n$.


Let $h_1, \ldots, h_l\in \{g_1, \ldots, g_n\}$ satisfy

   \begin{enumerate}[(a)]
   \item $0<  h_1<< h_2<< \ldots<< h_l$, and
\item  for each $g_i$ with $1\le i\le n$, there is exactly one $j_i$ with  $1\le j_i\le l$ such that  $g_i\in [h_{j_i}]$.
\end{enumerate}

Let $r_{j_i}=\frac{g_i}{h_{j_i}}$.  By assumption (\ref{GSS}), $r_{j_i}\in A_{[h_{j_i}]} =S$.  Since $S$ is a Scott set, there is some $r'\in S$ (e.g.,  $r'= r_{j_1}\oplus\ldots\oplus r_{j_n},$ the {\em join} of the $r_{j_i}$) that computes each of the $r_{j_i}$.

Let  $\tau''(x, \bar{g})$ be a partial type that  contains all formulas in $\tau'(x, \bar{g})$ as well as the  formulas described below (written in terms of the appropriate parameters in $\bar{g}$).
\begin{itemize}
\item[(a$^\prime$)] For all $i$ satisfying $1\le i< l$ and $n\in\omega$, include the formula  $0<  h_i \ \wedge\ nh_i< h_{i+1}$.
\item[(b$^\prime$)] For all $q\in \mathbb{Q}$ and  all $i$ satisfying $1\le i\le n$, if $q<r_{j_i}$, include the formula $qh_{j_i}< g_i$ in $\tau''(x, \bar{g})$.  Similarly, if $q>r_{j_i}$, include the  formula  $ g_i<qh_{j_i}$ in $\tau''(x, \bar{g})$.
\end{itemize}

\begin{claim} A formula holds of $\bar{g}$ in $G$ if and only if this formula is  in any extension of   $\tau''(x, \bar{g})$.   Moreover, $\tau''(x, \bar{g})$ is computable from $r'$.
\end{claim}
\begin{proof}
Since divisible ordered abelian groups admit quantifier elimination and $\tau'(x, \bar{g})$ is computable, it suffices to show we can deduce the order of  any two  terms in $\bar{g}$ from formulas in $\tau''(x, \bar{g})$ computably from $r'$.  Note that the formulas added to $\tau'(x, \bar{g})$ by condition (a$^\prime$) are computable from $r'$ since knowing the order of elements $h_1, \ldots, h_l$ is only finite information.    Consider a linear combination \mbox{$s_1g_1+\ldots +s_n g_n$} where $s_1, \ldots, s_n\in \mathbb{Z}$.  Ordering  any two  non-equal terms in $\bar{g}$ is the same as determining whether such a non-trivial linear combination  is positive or negative.  Suppose $i_k$ is the largest index in the linear combination for which $s_{i_k}\not=0$.  Let $h=h_{j_{i_k}}$.   To determine whether a nonzero term is positive or negative, we simply need to determine whether the sum of  all monomials in this term with non-zero $s_i$ and valuation $h$ is positive or negative.  Suppose $s_{i_1} g_{i_1}+\ldots +s_{i_k}g_{i_k}$ is this sum.  Since $\bar{g}$ is a valuation basis, this new linear combination is nonzero and  is positive if and only if $s_{i_1} r_{j_{i_1}}+\ldots +s_{i_k}r_{j_{i_k}}>0$ (See \cite{har}, Propositions 12 and 13).  Hence,  we can compute from $r'$ the ordering between any two terms in $\bar{g}$.
\end{proof}

 Since $\tau'(x, \bar{g})$ is finitely satisfiable in $G$,  the claim guarantees that  $\tau''(x, \bar{g})$ is finitely satisfiable in $G$ as well as computable in $r'$.
Hence, there is an $r'$-computable infinite tree $\mathcal{T}$ such that any path through $\mathcal{T}$ encodes a complete consistent type $\tau(x, \bar{g})$ extending $\tau''(x, \bar{g})$.  Since $S$ is a Scott set and  $\mathcal{T}$ is computable in  $r'\in S$, there is some $r\in S$ such that $r$ computes a complete extension $\tau(x, \bar{g})$ of $\tau''(x, \bar{g})$.

 Recall that ${G}'=\divhull{\bar{g}}$, and let $\Gamma'$ be the value set for $G'$.  We let
 \begin{eqnarray*}
  B=\{b\in G'\mid \tau(x, \bar{g})\vdash b\le x\} \\
  C=\{c\in G'\mid \tau(x, \bar{g})\vdash x\le c\}
  \end{eqnarray*}
  By quantifier elimination for divisible ordered abelian groups, to realize the type $\tau(x,\bar{g})$, it suffices to realize the partial type (also computable in $r\in S$)
  \begin{equation}\label{Gcut}
  \{b\le x\mid b\in B\}\cup    \{x\le c\mid c\in C\}.
  \end{equation}
If $\tau(x, \bar{g})\vdash x=b$ for any $b\in B$, then the type in (\ref{Gcut}) is realized by $b\in B\subset G$, and similarly for $x=c$ with $c\in C$, so suppose there are no such equalities.  Let $G''\succ G$ be such that there is some $x_0\in G''$ such that $G''\models \tau(x_0, \bar{g})$.  Consider the set $\Delta=\{v(d-x_0)\mid d\in\ G'\}$.  We examine three cases regarding the structure of $\Delta$.

\noindent
{\bf Case} 1 ({\em Immediate Transcendental}) - $\Delta$ has no largest element.

We observe that this case does not occur in our context as \mbox{$\divhull{\bar{g}, x_0}\supseteq G'$} has finite rank.   Thus, $\Delta$ is finite and has a maximum element.

 For the remaining two cases, we fix $d_0\in G'$ such that $v(d_0-x_0)$ is the maximum of $\Delta$.  We suppose that $d_0\in B$.  The argument in the case that $d_0\in C$ is symmetric.   Consider the partial type (which is also computable in $r\in S$)
  \begin{equation}\label{Gcut2}
  \{b-d_0< x'\mid b\in B\}\cup    \{x'< c-d_0\mid c\in C\}.
  \end{equation}
It is clear that if $x'$ satisfies this cut, then $x'+d_0$ satisfies (\ref{Gcut}).  We show that this cut is realized in $G$ in the remaining two cases.

\noindent
{\bf Case} 2 ({\em Residue Transcendental})  - $\Delta$ has a largest element, which is in $\Gamma'$

We can show as in the proof of Theorem \ref{doag} in \cite{sgr} the following claim.
\begin{claim}\label{restransGclaim} There exist $b_0\in B$ and $c_0\in C$ such that for all $b\in B$ and $c\in C$ with $b_0\le b$ and $c\le c_0$,
 \begin{eqnarray*}\label{RTcond}
v(b-d_0)=v(x_0-d_0)=v(c-d_0) \text{ and, hence, }\\
v(b-x_0)=v(x_0-d_0)=v(c-x_0).
\end{eqnarray*}
\end{claim}

%

By the claim, we have that for all $b\in B$ and $c\in C$ with  $b\ge b_0$ and $c\le c_0$,
\begin{eqnarray*}
\frac{b-d_0}{b_0-d_0}<\frac{x_0-d_0}{b_0-d_0}<\frac{c-d_0}{b_0-d_0}.
\end{eqnarray*}
Hence, the following partial type (also computable in $r\in S$)
  \begin{equation}\label{Gcut3}
  \{\frac{b-d_0}{b_0-d_0}< x'\mid b\in B \ \&\ b\ge b_0\}\cup    \{x'<\frac{c-d_0}{b_0-d_0}\mid c\in C\ \&\ c\le c_0\}
  \end{equation}
  is a cut of $\mathbb{R}$ filled  by some $\hat{r}\in\mathbb{R}$.  Since $r\in S$ computes the cut for  $\hat{r}$, we have that $\hat{r}\in S$.  Thus, by assumption (\ref{GSS}), there is some $\hat{g}\in G$ such that $\frac{\hat{g}}{b_0-d_0}=\hat{r}$ (since $\hat{r}\in S=A_{[b_0-d_0]}$).  Since $\hat{r}$ fills the cut described in (\ref{Gcut3}), by definition of $A_{[b_0-d_0]}$, the element $\hat{g}$ fills the cut described in (\ref{Gcut2}), as desired.

  \noindent
{\bf Case} 3 ({\em Group Transcendental}) - $\Delta$ has a largest element, which is not in $\Gamma'$


Consider the sets
\begin{eqnarray*}
\Delta_1=\{v(c-d_0)\mid c\in C\} \text { and } \Delta_2=\{v(b-d_0)\mid b\in B\ \&\ b>d_0\}.
\end{eqnarray*}
%
As in the proof of Theorem \ref{doag} in \cite{sgr}, we can show the following.
\begin{claim}\label{GrpTransClaim}

  $\Delta_1<v(d_0-x_0)<\Delta_2$.
 \end{claim}
%

Since $G'$ has finite rank, $\Delta_1$ and $\Delta_2$ are finite and form   a cut in the value set $\Gamma$. By  (\ref{GDLO}), $\Gamma$ is a dense linear ordering without endpoints, so there is some $y\in G$ with $y>0$ that fills this cut in $\Gamma$.  Then, for all $c\in C$ and $b\in B$ with $b> d_0$ we have  $v(b-d_0)>v(y)>v(c-d_0)$  so $b-d_0<y< c-d_0$.  Hence $y$ fills the cut given in (\ref{Gcut2}), finishing  the case where $\Delta$ has a largest element, which is not an element of $\Gamma'$.  This completes the proof that the properties stated are sufficient to guarantee that $G$ is recursively saturated.

%
%
%
\end{proof}

\section{Recursively saturated real closed fields}

\begin{dfn}
Let $r\in\mathbb{R}$.  Let $R$ be a real closed field.  Let $\bar{d}$ be a tuple of parameters in $R$.  We say a  length $\omega$ sequence  of elements \mbox{$(a_{i})_{ i<\omega} \subset \rc{\bar{d}}$}   is {\em computable in $r$}
if there is an $r$-computable sequence  of formulas $(\theta_{i}(x, \bar{d}))_{ i<\omega}$ such that $\theta_i(x, \bar{d})$ defines $a_i$ in $R$ for all $i<\omega$.

\end{dfn}
%

\begin{theo} \label{recsatrcf}
Let $R$ be a real closed field, $v$ its natural valuation, $G$ its value group and $k$ its residue field.
Then,  $R$ is recursively saturated in the language of ordered fields if and only if there is a Scott set $S$ such that

\begin{enumerate}[(i)]

\item\label{FArchComp} $G$ is recursively saturated with archimedean components all equal to $S$,

\item\label{Fresfield} $(k, +, \cdot, 0, 1, <)\cong (S, +, \cdot, 0, 1, <)$,

\item\label{Fpseudo} every pseudo Cauchy sequence of length $\omega$ in a subfield of $R$ of finite absolute transcendence degree over $\mathbb{Q}$ that is computable in an element of $S$ has a pseudo limit in $R$.

\item\label{FtypeScomp} every type realized by some $n$-tuple $\bar{a}$ in $R$ is computable in an element of $S$.
\end{enumerate}
\end{theo}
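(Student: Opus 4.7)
The plan is to adapt the blueprint of Theorem~\ref{recsatG} (for divisible ordered abelian groups) together with the three-case trichotomy of Theorem~\ref{rcf} (for $\aleph_0$-saturation in real closed fields), with careful tracking of which element of $S$ computes which piece of data at each step.

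For necessity, suppose $R$ is recursively saturated. I would take $S$ to be the common archimedean component of $G$ provided by Theorem~\ref{recsatG}. That theorem applies because $G$ inherits recursive saturation from $R$: any computable type $\alpha(x,\bar g)$ in the language of ordered groups translates into a computable type in $R$ by replacing each atomic condition such as $v(y)<v(z)$ by the computable family $\{n|y|<|z|:n\in\N\}$ of field formulas and each parameter $g$ by a representative $r\in R$ with $v(r)=g$. The same interpretation trick (using $\{v(x-q)>0:q\in\Q\}$ and the valuation ring $\mathcal{O}_R$) shows that $k$ is a recursively saturated archimedean real closed field, hence a Scott set; the canonical identification of $k$ with the archimedean component around any $r\in\mathcal{O}_R$ with $\bar r\neq 0$ then matches $k$ with $S$, giving~(\ref{Fresfield}). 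Condition~(\ref{Fpseudo}) is immediate: if $(a_i)_{i<\omega}$ is computable in some $r\in S$, the partial type expressing that $x$ is a pseudo limit is $r$-computable and finitely satisfied, so recursive saturation realizes it. For~(\ref{FtypeScomp}) one uses the classical fact that the set of types realized in a recursively saturated structure is a Scott set, together with the fact that this Scott set agrees with (or is computable in) the archimedean-component Scott set under the structural assumptions on $R$.

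For sufficiency, let $\tau'(x,\bar a)$ be a computable partial type finitely satisfiable in $R$. Following the group proof, the strategy is to extend $\tau'$ to a complete type $\tau$ computable in some $r\in S$ and realize $\tau$ in $R$ by reducing to a cut over $R':=\rc{\bar a}$, which has finite transcendence degree over $\Q$. By~(\ref{FtypeScomp}), $\mathrm{tp}(\bar a)$ is computable in some $r'_0\in S$; together with the computable $\tau'$ and quantifier elimination for real closed fields, this yields a finitely satisfiable extension $\tau''(x,\bar a)\supseteq\tau'(x,\bar a)$ that absorbs all consequences imposed by $\mathrm{tp}(\bar a)$ and is computable in some $r'\in S$. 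Applying the Scott set property to the tree of finitely satisfiable completions of $\tau''$ delivers a complete consistent $\tau(x,\bar a)\supseteq\tau''(x,\bar a)$ computable in some $r\in S$, and quantifier elimination reduces realizing $\tau$ to filling an $r$-computable cut $\{b<x:b\in B\}\cup\{x<c:c\in C\}$ with $B,C\subseteq R'$.

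To fill the cut, take any realization $x_0$ in an elementary extension $R''\succ R$ and analyze $\Delta:=\{v(d-x_0):d\in R'\}$ via the standard trichotomy. In the \emph{residue transcendental} case ($\Delta$ has maximum inside $\Gamma'=v(R')$), an analogue of Claim~\ref{restransGclaim} reduces the problem to filling an $r$-computable cut in the residue field, which $k\cong S$ fills by the Scott set property. In the \emph{value transcendental} case ($\Delta$ has maximum outside $\Gamma'$), the dimension inequality guarantees that the relevant cut in $\Gamma$ lies between \emph{finite} subsets; recursive saturation of $G$ from~(\ref{FArchComp}) makes $\Gamma$ an $\eta_0$-set, so the cut is filled. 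The genuinely new case, and the main obstacle, is the \emph{immediate transcendental} case ($\Delta$ has no maximum), which was absent in the group proof because $\divhull{\bar g}$ had finite rank there. One must construct a pseudo Cauchy sequence $(a_i)_{i<\omega}\subset R'$ with pseudo limit $x_0$ that is computable in some element of $S$, so that~(\ref{Fpseudo}) produces a pseudo limit in $R$ filling the cut. Since $R'$ has finite transcendence degree and $\mathrm{tp}(\bar a)$ is computable in $r'_0\in S$, a presentation of $R'$ is itself computable in some $r^{\ast}\in S$; using $r$ to read off the cut and $r^{\ast}$ to enumerate $R'$, one recursively chooses $a_{i+1}\in R'$ whose cut position in $\tau$ forces $v(a_{i+1}-a_i)>v(a_i-a_{i-1})$, such a choice being available precisely because $\Delta$ has no maximum. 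The join of $r$ and $r^{\ast}$ lies in $S$, so the sequence is $S$-computable, and~(\ref{Fpseudo}) supplies the pseudo limit in $R$ completing sufficiency.
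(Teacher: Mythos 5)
The skeleton of the argument (extend $\tau'$ to a complete $\tau$ computable in some $r\in S$ via the Scott set, reduce to a cut over $R'=\rc{\bar a}$, run the three-case trichotomy on $\Delta$) is exactly the paper's, and your necessity direction is close in spirit. But there are two genuine gaps in the sufficiency argument, both in the places where the proof has to do real work.

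\textbf{The group transcendental case.} You assert that ``the relevant cut in $\Gamma$ lies between finite subsets'' and that $\eta_0$-ness of $\Gamma$ then fills it. This conflates the field case with the group case. In Theorem~\ref{recsatG}, the sets $\Delta_1,\Delta_2$ are subsets of the \emph{value set} $\Gamma'$ of the finite-rank group $G'$, which is genuinely finite, so $\eta_0$-ness of $\Gamma$ suffices. In the field case, $\Delta_1=\{v(c-d_0):c\in C\}$ and $\Delta_2=\{v(b-d_0):b\in B,\ b>d_0\}$ are subsets of the \emph{value group} $G'=v(R')$, which has finite rational rank but is infinite, and the gap $\Delta_1<y<\Delta_2$ need not be a gap between archimedean classes: $g=v(d_0-x_0)$ can lie in the same archimedean class as elements of $G'$ (e.g., when the cut is rationally ``residue transcendental'' inside a rank-one summand of $G'$). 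To fill this cut one needs the full strength of condition~(\ref{FArchComp}) --- both the $\eta_0$-ness of the value set \emph{and} the archimedean components being a Scott set --- packaged as recursive saturation of $G$. The paper does this by fixing a multiplicative section $\{\prod d_i^{q_i}\}$ for $G'$ and writing a computably enumerable partial type $\tilde\eta(y,v(d_1),\ldots,v(d_n),g_r)$ (with $g_r$ coding $r$ inside an archimedean component of $G$) which is then realized by recursive saturation of $G$; your appeal to $\eta_0$-ness alone is not enough.

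\textbf{The immediate transcendental case.} You propose to ``recursively choose $a_{i+1}\in R'$ whose cut position in $\tau$ forces $v(a_{i+1}-a_i)>v(a_i-a_{i-1})$.'' But the condition $v(a_{i+1}-a_i)>v(a_i-a_{i-1})$ unwinds to the infinite scheme $\{n|a_{i+1}-a_i|<|a_i-a_{i-1}|: n\in\N\}$, a $\Pi_1$ condition relative to $r$, and there is no effective way to certify it at a single finite stage of a recursion. This is precisely why the paper does not build the sequence directly: it constructs an $r$-computable tree $\mathcal T\subset 2^{<\omega}$ whose nodes record, for a finite initial segment, only the \emph{finitely many} inequalities $n|a_k-a_j|<|a_j-a_i|$ for $n$ below the current length (together with the cut-compatibility conditions), proves this tree is infinite by exhibiting a cofinal sequence in $\Delta$ as a path, and then uses the Scott set axiom to extract a path computable in some $t\in S$. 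The path then yields an $S$-computable pseudo Cauchy sequence and condition~(\ref{Fpseudo}) applies. Without the tree/Scott-set mechanism, the ``recursively choose'' step is not actually recursive, and the sequence you produce is not certifiably computable in an element of $S$.

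A smaller issue: in the necessity direction the paper does not show recursive saturation of $G$ by a direct interpretation. It first extends the given partial type in $G$ to a complete one computable in some $r\in S$ (using the Scott set), reduces to a cut, and then translates that cut into a c.e.\ partial type in $R$ via a multiplicative section $\{\prod d_i^{q_i}\}$, realizing it by recursive saturation of $R$. Your interpretation sketch glosses over the fact that equalities of values in $G$ translate to $\Sigma_1$ (not open) conditions in $R$, and that translating a formula about an unknown value $x\in G$ needs a field element $y$ with $v(y)=x$, whose existence is exactly the point. Reducing first to a cut sidesteps both problems.
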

\begin{proof}
We first suppose that $R$ is recursively saturated.  We show that there is a Scott set $S$ such that conditions (\ref{FArchComp}), (\ref{Fresfield}),  (\ref{Fpseudo}), and (\ref{FtypeScomp})  hold with this $S$.

\noindent
(\ref{Fresfield}) Since $R$ is recursively saturated as an ordered  field, $(R, +, 0, <)$ is recursively saturated as a divisible ordered abelian group.  By Theorem \ref{recsatG}, there is some Scott set $S$ such that  the archimedean components $A_{[r], r}$ of $(R, +, 0, <)$  equal $S$ for all nonzero $r\in R$.   In particular, we have that $A_{[1], 1}=S$.  Hence, \mbox{$(k, +, 0, <)\cong (S, +, 0, <)$.}  Since $R$ is a real closed field,  $k$ is a real closed field as well.  Hence,  there is a subset $K\subset \mathbb{R}$ that is a real closed field isomorphic to $k$ and  an isomorphism $\phi$  from $(S, +, 0, <)$ to  $(K, +, 0, <)$.  By H\"{o}lder's Theorem, $\phi(x)=rx$ for some $r\in \mathbb{R}$.  We show  that there is a field isomorphism from $S$ to $K$.   Since $S$ is a Scott set, $S$ is a real closed subfield of $\mathbb{R}$.  Since $1\in S\cap K$, we have $r\in K$ and $\frac{1}{r}\in S$.   Since $S$ and $K$ are in fact sets of reals that form fields, $r, \frac{1}{r}\in S\cap K$.  Hence, $S=K$ (given $s\in S$, $\frac{s}{r}\in S$ so $\phi(\frac{s}{r})=s\in K$, and the other containment is similar).  So, the identity function from  $S$ to  $K$ is a {\em field} isomorphism, giving the desired result.

\noindent
(\ref{FtypeScomp}) Let $\gamma(\bar{x})$ be a type realized by the tuple $\bar{a}$ in $R$.  Let $r\in\mathbb{R}$ have the same Turing degree as $\gamma$, and let $\Psi$ be the Turing reduction computing $r$ from $\gamma$.  It suffices to  show $r\in S$.  Let $(\theta_i(\bar{x}))_{i\in\omega}$ be a fixed  effective enumeration of all formulas in the language of ordered fields.  Given $\sigma\in 2^{<\omega}$, let $\theta_\sigma(\bar{x})$ denote the conjunction of the formulas $\theta_i(\bar{x})$ such that $\sigma(i)=1$ and the formulas $\neg\theta_i(\bar{x})$ such that $\sigma(i)=0$.

We enumerate the formula
\begin{equation*}
\theta_{\sigma}(\bar{a})\rightarrow q< x< q'
\end{equation*}
into the partial type $\tilde{\gamma}(\bar{a}, x)$ if $\Psi$ computes that its output real must be between $q$ and $q'$ for $q< q'\in \mathbb{Q}$ from $\sigma$.  The partial type  $\tilde{\gamma}(\bar{a}, x)$ is computably enumerable and finitely satisfiable.  Since $R$ is recursively saturated, there is some $\tilde{r}$ so that  $\tilde{\gamma}(\bar{a}, \tilde{r})$ holds in $R$.  Since  $\gamma(\bar{a})$ holds in $R$ and $\Psi$ computes $r$ from $\gamma$, we have $r=\tilde{r}\in A_{[1], 1}=S$.

\noindent
(\ref{FArchComp})  We first show that all the archimedean components of $G$ equal $S$.
Let $r\in S$.
Since $S=A_{[1], 1}$ where  $A_{[1], 1}$ is an archimedean component of  $(R, +, 0, <)$, there is some $a\in R$ such that $r=\frac{a}{1}\in A_{[1], 1}$.  Let $g$ be a nonzero element of $G$ so $v(a_g)=g$ for some $a_g>0$ in $R$.   We show $r\in A_{[g], g} $.  Note that  $A_{[g], g}$ is  an archimedean component of  $(G, +, 0, <)$.  The group  $(G, +, 0, <)$ is isomorphic to a section of the {\em multiplicative} group $(R^{>0}, \cdot, 1, <)$ as opposed to the additive group $(R, +, 0, <)$.  If $r\in\mathbb{Q}$, then  $r \in A_{[g], g}$ since $G$ is divisible, so we may suppose $r\not\in \mathbb{Q}$.

Let $\delta(x,  a, a_g)$ be the partial type in the language of real closed fields consisting of all formulas with $q, q'\in\mathbb{Q}^{>0}$ and $q<q'$ of the form
\begin{equation*}
q<a< q'  \rightarrow a_g^q<x<a_g^{q'}
\end{equation*}

 The set of formulas $\delta(x, a, a_g)$ is computable and finitely satisfiable in ${R}$ since $R$ is real closed.  Since $R$ is recursively saturated, there is some $a'\in R$ so that  $\delta(a', a, a_g)$ holds in $R$.  Let $g'=v(a')\in G$.  Then, $\frac{g'}{g}=r$, so $r\in A_{[g'], g'}$.

Now, let $r\in A_{[g], g}$, so there is some $g'\in G$ such that $r=\frac{g'}{g}$.  Let $a_{g'}, a_g\in R$ be positive elements such that $v(a_{g'})=g'$ and $v(a_g)=g$.  If $r\in\mathbb{Q}$, then it is clear $r\in S$ as all rationals are computable.  Otherwise, let $\delta'(x,  a_{g'}, a_g)$ be the partial type in the language of real closed fields consisting of all formulas with $q, q'\in\mathbb{Q}^{>0}$ and $q<q'$ of the form
\begin{equation*}
a_g^q<a_{g'}<a_g^{q'}  \rightarrow q<x< q'
\end{equation*}
 As before, the set of formulas $\delta'(x,  a_{g'}, a_g)$ is computable and finitely satisfiable in ${R}$, so there is some $a\in R$ so that $\delta'(a,  a_{g'}, a_g)$ holds in $R$.   Then, $r=\frac{a}{1}\in k= S$.  Hence, we have $A_{[g], g}=S=k$ for all $g\in G$, so we can simply refer to $A_{[g]}$ instead of $A_{[g], g}$.

We now show that $G$ is recursively saturated.
Let $\bar{g}=(g_1, \ldots, g_n)$ be an $n$-tuple from $G$.   Let $\beta'(x, \bar{g})$ be a computable partial type in the language of  ordered abelian groups so that $\beta'(x, \bar{g})$ is finitely satisfiable in $G$.
 By the argument found at the beginning of the sufficiency proof for Theorem \ref{recsatG}, we can find an $r\in S$ such that $r$ computes a complete extension $\beta(x, \bar{g})$ of $\beta'(x, \bar{g})$ that is finitely satisfiable in $G$. Since $r\in S=k$, there exists some $a\in R$ such that $\frac{a}{1}=r$.

 Set ${G}'=\divhull{\bar{g}}$.  If $\beta\vdash x=g$ for any $g\in G'$, then we are done.  Otherwise,  let
 \begin{eqnarray*}
  B=\{b\in G'\mid \tau(x, \bar{g})\vdash b< x\} \\
  C=\{c\in G'\mid \tau(x, \bar{g})\vdash x< c\}
  \end{eqnarray*}
 As in Theorem \ref{recsatG}, it suffices to realize the partial type (also computable in $r\in S$) in $G$
  \begin{equation}\label{GcutRnec}
  \{b < x\mid b\in B\}\cup    \{x< c \mid c\in C\}
  \end{equation}
that describes a cut in $G$.  We translate realizing this cut in $G$ into realizing a particular partial type (in the language of ordered fields) in $R$.

 Let $R'=\rc{\bar{g}}$.  Take  $d_1, \ldots, d_n\in R'$ so that $d_1, \ldots, d_n>0$, $\{v(d_i)\mid 1\le i\le n\}$ is a basis for $G'$, and the multiplicative subgroup
 \begin{equation*}
 \{\prod_{i=1}^n d_i^{q_i}\in R'\mid q_i\in \mathbb{Q} \text{ for } 1\le i\le n\}
 \end{equation*}
  is a section for $G'$ in $R'$.  We show that there is a computably enumerable partial type in the language of ordered fields  $\tilde{\beta}(x, d_1, \ldots, d_n, a)$ (with parameters in $R$) that corresponds to  the  cut  described by $\beta(x, \bar{g})$ over $G'$.
    Note that
\begin{eqnarray*}
B=\big\{\sum_{i=1}^{n} q_iv(d_i)\in G'\mid \sum_{i=1}^{n}q_iv(d_i)<b\ \&\  b\in B\ \&\ q_1, \ldots, q_{n}\in \mathbb{Q} \big\} \text{ and }\\
C=\big\{\sum_{i=1}^{n}q_iv(d_i)\in G'\mid c<\sum_{i=1}^{n}q_iv(d_i)\ \&\ c\in C\ \&\ q_1,\ldots, q_{n}\in \mathbb{Q}\big\}.
\end{eqnarray*}

 Given some $(q_1, \ldots, q_n)\in \mathbb{Q}^n$, the statement that determines whether ${\sum_{i=1}^nq_iv(d_i)}$  is in $B$ or $C$ can be computably located in an effective listing of all formulas.   Since   $r\in S$ computes the complete type  $\beta(x, \bar{g})$, there is some Turing reduction $\Upsilon$ that computes  from $r$ whether a given \mbox{$(q_1, \ldots, q_n)\in \mathbb{Q}^n$} satisfies $\sum_{i=1}^nq_iv(d_i)\in B$ or $\sum_{i=1}^nq_iv(d_i)\in C$.

We now describe $\tilde{\beta}(x, d_1, \ldots, d_n, a)$.
For each  pair of rationals \mbox{$(q< q')$,} each stage $s\in\mathbb{N}$, and $(q_1, \ldots, q_n)\in \mathbb{Q}^n$, compute whether Turing reduction $\Upsilon$, using only the information that some real $\tilde{r}$ satisfies  $q<\tilde{r}< q'$, halts in $s$ steps and outputs whether $\sum_{i=1}^nq_iv(d_i)$ is in  $B$ or $C$.  If $\Upsilon$ halts in this situation, enumerate the  formula
\begin{eqnarray*}
q<a< q' \rightarrow \prod_{i=1}^nd_i^{q_i}< x \text{ if $\Upsilon$ computes that } \sum_{i=1}^nq_iv(d_i)\in C \text{ or }\\
q<a< q'\rightarrow  x <\prod_{i=1}^nd_i^{q_i}\text{ if $\Upsilon$ computes that } \sum_{i=1}^nq_iv(d_i)\in B
\end{eqnarray*}
into $\tilde{\beta}(x, d_1, \ldots, d_n, a)$.

 The partial type $\tilde{\beta}(x, d_1, \ldots, d_n, a)$ is finitely satisfiable in $R$ because $R$ is a dense linear ordering without endpoints.  Since $\tilde{\beta}(x, d_1, \ldots, d_n, a)$ is a computably enumerable and $R$ is recursively saturated, there exists some $d\in R$ so that  $\tilde{\beta}(d, d_1, \ldots, d_n, a)$ holds in $R$.  By our choice of $a$ and definition of $\tilde{\beta}$, we have that $B< v(d)<C$. So, $\beta(v(d), \bar{g})$ holds in $G$, as desired.

\noindent
(\ref{Fpseudo})
Let $(a_i)_{ i<\omega}\subset R'$  be a pseudo Cauchy sequence in a subfield $R'$ of $R$ of finite absolute transcendence degree over $\mathbb{Q}$.  Moreover, suppose that $(a_i)_{ i<\omega}$ is computable in $r\in S=k$. By definition, there is an $r$-computable sequence  of formulas $(\theta_{i}(x, \bar{y}))_{ i<\omega}$ and tuple $\bar{d}$ from $R'$ such that $\theta_i(x, \bar{d})$ defines $a_i$ in $R$, where  $R'=\rc{\bar{d}}$.  Let $\Phi$ denote the Turing reduction from $r$ to this sequence.   Since $r\in S=k$, there exists some $a\in R$ such that $\frac{a}{1}=r$.

 For each  pair of rationals $q< q'$ and  any $i, s\in\mathbb{N}$, compute whether Turing reduction $\Phi$, using only the information that some real $\tilde{r}$ satisfies  $q<\tilde{r}< q'$, halts in $s$ steps and outputs  indices for  formulas $\theta_i(x, \bar{y})$ and  $\theta_{i+1}(x, \bar{y}))$.   If $\Phi$ halts in this situation, enumerate the following formula into the partial type $\kappa(x, \bar{d}, a)$
 \begin{equation*}
q< a<q'\rightarrow 
[(\exists z_{i}, z_{i+1})(\theta_i(z_i, \bar{d}) \, \wedge\, \theta_{i+1}(z_{i+1}, \bar{d})) \  \wedge\  n|x- z_{i+1}|<|z_i- z_{i+1}|]
\end{equation*}

\noindent for each $n\in\omega$.  Note that $\kappa(x, \bar{d}, a)$ is computably enumerable and  finitely satisfiable in $R$.  Given a finite set of formulas $D\subset \kappa(x, \bar{d}, a)$, let $j<\omega$ be the largest number  such that  $\theta_j(x, \bar{d})$ appears as a subformula of an element in $D$.  Then, $a_{j+1}\in R$ satisfies all formulas in $D$ since $(a_i)_{ i<\omega}$ is pseudo Cauchy.  Since $R$ is  recursively saturated, there is some $\tilde{a}\in R$ such that
$\kappa(\tilde{a}, \bar{d}, a)$ holds in $R$.
This implies that $v(\tilde{a}-a_{i+1})>v(a_{i+1}-a_{i})$ for all $i<\omega$.  From \mbox{$v(\tilde{a}-a_i)\ge\min\{v(\tilde{a}-a_{i+1}), v(a_{i+1}-a_i)\}$} it follows  that  \mbox{$v(\tilde{a}-a_{i})= v(a_{i+1}-a_i)$} for all $i<\omega$.  Hence, $\tilde{a}$ is a pseudo limit of $(a_{i})_{ i < \omega}$, as required, and the four   conditions (\ref{FArchComp}), (\ref{Fresfield}),  (\ref{Fpseudo}), and (\ref{FtypeScomp}) are necessary if $R$ is recursively saturated.

Let $R$ be a real closed field.   We assume that there is a Scott set $S$ for which conditions (\ref{FArchComp}), (\ref{Fresfield}),  (\ref{Fpseudo}), and (\ref{FtypeScomp})  hold for $R$ and $S$.  We  show that $R$ is recursively saturated.

Let $\bar{a}=(a_1, \ldots, a_n)$ be a finite tuple from $R$, and let $\tau'(x, \bar{a})$ be a computable set of formulas that is finitely satisfiable in $R$.    We first extend $\tau'(x, \bar{y})$ to a complete type $\tau(x, \bar{y})$ so that $\tau(x, \bar{a})$ is also finitely satisfiable in $G$.
We first make an intermediate extension $\tau''(x, \bar{y})$ of $\tau'(x, \bar{y})$.  Let $\gamma(\bar{y})$ be the complete type of $\bar{a}$ in $R$.  We then let \mbox{$\tau''(x, \bar{y})=\tau'(x, \bar{y})\cup \gamma(\bar{y})$.}  By condition (\ref{FtypeScomp}), the type $\gamma(\bar{y})$ is computable in some  $r'\in S$, so $\tau''(x, \bar{y})$ is as well.  Moreover, $\tau''(x, \bar{a})$ is finitely satisfiable in $R$.  Hence, there is an $r'$-computable infinite tree $\mathcal{T}$ such that any path through $\mathcal{T}$ encodes a complete consistent type  extending $\tau''(x, \bar{a})$.  Since $S$ is a Scott set and  $\mathcal{T}$ is computable in  $r'\in S$, there is some $r\in S$ such that $r$ computes a complete extension $\tau(x, \bar{a})$ of $\tau''(x, \bar{a})$.

 Set ${R}'=\rc{\bar{a}}$.  We set
 \begin{eqnarray*}
  B=\{b\in R'\mid \tau(x, \bar{a})\vdash b\le x\} \text{ and}\\
  C=\{c\in R'\mid \tau(x, \bar{a})\vdash x\le c\}.
  \end{eqnarray*}
Real closed fields, like divisible ordered ablian groups, have  quantifier elimination. Hence, to realize the type $\tau(x, \bar{a})$, it suffices to realize the partial type (also computable in $r\in S$)
  \begin{equation}\label{Fcut}
  \{b\le x\mid b\in B\}\cup    \{x\le c\mid c\in C\}.
  \end{equation}
If $\tau(x, \bar{a})\vdash x=b$ for any $b\in B$, then the type in (\ref{Fcut}) is realized by $b\in B\subset R$, and similarly for $x=c$ with $c\in C$, so suppose there are no such equalities.  Let $R''\succ R$ be such that there is some $x_0\in R''$ satisfying $R''\models \tau(x_0, \bar{a})$.  Consider the set $\Delta=\{v(d-x_0)\mid d\in R'\}$.  We examine  three cases  for the structure of $\Delta$, as we did in Theorem \ref{recsatG} in the group case.

\noindent
{\bf Case} 1 ({\em Immediate Transcendental}) - $\Delta$ has no largest element.
In this case,  for all $d\in R'$ there is a $d'\in R'$ such that   \mbox{$v(d-x_0)<v(d'-x_0)$.}  We construct a  pseudo Cauchy sequence $(a_i)_{i<\omega}$ that is  computable in some element of $S$ and has a pseudo limit $a\in R$ satisfying $B<a<C$.
  By effective quantifier elimination for real closed fields, there is a computable enumeration of formulas $\{\psi_i(x, \bar{a})\}_{i<\omega}$ such that
   \begin{enumerate}[(a)]
  \item every element in $R'$ is defined by exactly one formula in this sequence and
  \item if $a_i$ and $a_j$ are defined by $\psi_i(x, \bar{a})$ and $\psi_j(x, \bar{a})$ respectively, then determining whether $a_i< a_j$ and whether $a_i\in B$ or $a_i\in C$ in $R'$ is $r$-computable.
  \end{enumerate}
%
    Let $a_i$ denote the element in $R'$  that satisfies the definition $\psi_i(a_i, \bar{a})$.
   We define a tree $\mathcal{T}\subset 2^{<\omega}$ computable in $r$.  For any $\sigma\in 2^{<\omega}$, we put $\sigma\in \mathcal{T}$ if the following two properties hold.

   \begin{enumerate}[(I)]
   \item\label{Ftreedef1} For all  $i<length(\sigma)$, set $a'$ equal to $0$ if for all $j\le i$, $\sigma(j)=0$, and otherwise, set $a'$ equal to $a_{j'}$ where \mbox{$j'=\max \{j\le i\mid\sigma(j)=1\}$}.  Then,
  \begin{equation*}
  (\forall j\le i)(a_{j}\in B\implies a_{j}\le a') \ \&\   (a_{j}\in C\implies a'\le a_{j}))
  \end{equation*}
   \item\label{Ftreedef2} $(\forall\, i< j<k<n=length(\sigma))\\ (\sigma(i)=\sigma(j)=\sigma(k)=1\implies n|a_k-a_j|<|a_j-a_i|)$
   \end{enumerate}
%
It is clear $\mathcal{T}$ is a tree by definition.  We now show $\mathcal{T}$ is infinite.      Since $\Delta$ has no largest element, there exists a cofinal sequence in $\Delta$.  Moreover, since $R'$ is countable and $B<x_0<C$ in $R''$, we can take this cofinal sequence to have the form $(v(a_{i_l}-x_0))_{l<\omega}$ and to satisfy the following two properties.
  \begin{enumerate}[(a)]
 \item The sequences $(i_l)_{l<\omega}$ and $(v(a_{i_l}-x_0))_{l<\omega}$ are  increasing.
\item For each $n<\omega$,  if we set $a'$  equal to $0$ if no $j\le n$ equals some $i_l$ and we set $a'$ equal to $a_{i_{l'}}$ where index \mbox{$i_{l'}=\max \{i_l\le n\}$} otherwise,  then \\
$(\forall\, j\le n)(a_{j}\in B\implies a_{j}\le a') \ \&\   (a_{j}\in C\implies a'\le a_{j}))$
\end{enumerate}

Let  $\mathcal{P}'\in 2^\omega$ be defined so that $\mathcal{P}'(j)=1$ if and only if $j=i_l$ for some $l\in\omega$.  We show that $\mathcal{P}'$ is a path through $\mathcal{T}$, so $\mathcal{T}$ is infinite.  Let $\sigma_n=\mathcal{P}'\upharpoonright n$.  It is clear that $\sigma_n$ satisfies (\ref{Ftreedef1}) by definition.  We show that $\sigma_n$ satisfies (\ref{Ftreedef2}).  Suppose $i<j<k<n$ with
\begin{equation*}
\sigma_n(i)=\sigma_n(j)=\sigma_n(k)=1,
\end{equation*}
 i.e., $i=i_{l}$, $j=i_{l'}$ and $k=i_{l''}$ with $l<l'<l''$.  It suffices to show that $v(a_j-a_i)<v(a_k-a_j)$.  We have that
\begin{equation*} v(a_{i_l}-x_0)<v(a_{i_{l'}}-x_0)<v(a_{i_{l''}}-x_0)
\end{equation*}
and
\begin{eqnarray*}
v(a_j-a_i)=\min(v(a_{i_{l'}}-x_0), v(a_{i_{l}}-x_0))=v(a_{i_l}-x_0)\\
v(a_k-a_j)=\min(v(a_{i_{l''}}-x_0), v(a_{i_{l'}}-x_0))=v(a_{i_{l'}}-x_0) \text{ so }\\
v(a_j-a_i)<v(a_k-a_j) \text{ as desired.}
 \end{eqnarray*}
 Hence, $\mathcal{T}$ is an infinite tree computable in $r$.  Since $S$ is a Scott set, there exists a path $\mathcal{P}$ through $\mathcal{T}$  computable in some $t\in S$.  Since $B$ and $C$ form a proper cut in $R'$,  there are infinitely many $j<\omega$ such that \mbox{$\mathcal{P}(j)=1$} by property (\ref{Ftreedef1}) of $\mathcal{T}$.
We then can compute in $t$,  for each $l<\omega$, the index $k_l$ such that \mbox{$\mathcal{P}(k_l)=1$} and $|\{j\le k_l\mid \mathcal{P}(j)=1\}|=k_l$.
By property (\ref{Ftreedef2}) of the definition of $\mathcal{T}$, the sequence $(a_{k_l})_{l<\omega}$ is  pseudo Cauchy.
Since    $(\psi_{k_l}(x, \bar{y}))_{l<\omega}$ is computable in $t$, the sequence $(a_{k_l})_{l<\omega}$ (defined by this sequence of formulas over $\bar{a}$) has a pseudo limit $a\in R$ by assumption (\ref{Fpseudo}).

We show that $B<a<C$ holds in $R$, and so $a$ realizes the type in (\ref{Fcut}).
Let $b\in B\subset R'$.  We claim that $b<a$.  Otherwise, $a\le b <x_0$.  By definition of $\mathcal{T}$, there exists some $l<\omega$ such that $a\le b\le a_{k_l}< a_{k_{l+1}}$.  Then, $v(a- a_{k_{l+1}})\le v(a-a_{k_l})$.  Since $a$ is a pseudo limit for $(a_{k_l})_{l<\omega}$,
\begin{equation*}
v(a- a_{k_{l+1}})=v(a_{k_{l+2}}-a_{k_{l+1}})> v(a_{k_{l+1}}-a_{k_{l}})=v(a- a_{k_{l}}),
\end{equation*}
 a contradiction, so we have shown $b<a$.  The argument that $a<c$  for any $c\in C$ is similar.

 \noindent
{\bf Case} 2 ({\em Residue  Transcendental}) - $\Delta$ has a largest element $g\in v(R')$

Assume that $\Delta$ has a largest element $g\in v(R')$.  Let $a>0$ be such that $a, d_0\in R'$ and $v(d_0-x_0)=g=v(a)$.

\begin{claim} There exists $b_0\in B$ and $c_0\in C$ such that for all $b\in B$ with $b\ge b_0$ and for all $c\in C$ with $c\le c_0$, we have
\begin{eqnarray*}
v(b-d_0)=g=v(a)=v(c-d_0) \text{ and, hence,}\\
v(b-x_0)=g=v(a)=v(x_0-d_0)=v(c-x_0).
\end{eqnarray*}
\end{claim}

Like the corresponding Claim  \ref{restransGclaim}, its proof is a straightforward adaptation of the proof of the analogous statement in Theorem \ref{doag} in \cite{sgr}.

%

Consider the partial type (also computable in $r$):
\begin{equation}\label{Fcutresfield}
\Big\{\frac{b-d_0}{a}<x\mid b\in B\ \&\ b\ge b_0\Big\}\cup \Big\{x<\frac{c-d_0}{a}\mid c\in C\ \&\ c\le c_0\Big\}
\end{equation}

If some $x'$ realizes the type in (\ref{Fcutresfield}) then $x=a\cdot x'+d_0$ realizes the type in (\ref{Fcut}).  So, it suffices to find such an $x'\in R$.  Again, suppose $d_0\in B$, as the case where $d_0\in C$ is symmetric.

By the claim, for all $b\in B$ and $c\in C$ with $b_0\le b<c\le c_0$
\begin{equation*}
v\Big(\frac{b-d_0}{a}\Big)=v\Big(\frac{x_0-d_0}{a}\Big)=v\Big(\frac{c-d_0}{a}\Big)=0
\end{equation*}
Furthermore, if we take the residues of these elements, we have
\begin{equation*}
\overline{\frac{b-d_0}{a}}<\overline{\frac{x_0-d_0}{a}}<\overline{\frac{c-d_0}{a}}.
\end{equation*}
All inequalities in the line above are strict since otherwise $g=v(x_0-d_0)$ is not the maximum of $\Delta$.   Hence, the two sets
\begin{eqnarray*}
\{q\in\mathbb{Q}\mid q<\frac{b-d_0}{a}\ \&\ b\in B\ \&\ b\ge b_0\} \\ \{q\in\mathbb{Q}\mid  \frac{c-d_0}{a}<q\ \&\ c\in C\ \&\ c\le c_0\}
\end{eqnarray*}
form a cut in $\mathcal{R}$ that is computable in $r$.  Let $r'\in \mathbb{R}$ fill this cut.  Since $r'$ is computable in $r$, we have $r'\in S\cong k$ by assumption (\ref{Fresfield}).  Thus, there is some $x'\in R$  that realizes the partial type in (\ref{Fcutresfield}), as desired.

 \noindent
{\bf Case} 3 ({\em  Group Transcendental}) - $\Delta$ has a largest element $g\not\in v(R')$

Let $d_0\in R'$ such that $v(d_0-x_0)=g$ is the maximum of $\Delta$.  We suppose that $d_0\in B$; the case that $d_0\in C$ is similar.

  Consider the sets
\begin{eqnarray*}
\Delta_1=\{v(c-d_0)\mid c\in C\} \text { and } \Delta_2=\{v(b-d_0)\mid b\in B\ \&\ b>d_0\}.
\end{eqnarray*}

\begin{claim}
$\Delta_1<g<\Delta_2$.
 \end{claim}

 As for the corresponding Claim \ref{GrpTransClaim} in the group case, the proof of the above claim can be found in  Theorem \ref{doag} in \cite{sgr}.
%

By the claim,
\begin{equation*}
\eta(y)=\{v(c-d_0)<y\mid c\in C\}\ \cup \ \{y<v(b-d_0)\mid b\in B\ \&\ b>d_0\}
\end{equation*}
is a type in $G$ with parameters in $G'=v(R')$ that describes a cut in $G$.    The real closed field $R'$ has finite absolute transcendence degree, so $G'$ has finite rational rank (see \cite{sz}, Section 10).
%
Take  $d_1, \ldots, d_n\in R'$ so that $\{v(d_i)\mid 1\le i\le n\}$ is a basis for $G'$ and the multiplicative subgroup
\begin{equation*}
\{\prod_{i=1}^n d_i^{q_i}\in R'\mid q_i\in \mathbb{Q} \text{ for } 1\le i\le n\}
\end{equation*}
 is a section for $G'$ in $R'$.  We show that there is a computably enumerable partial type   $\tilde{\eta}(y, v(d_1), \ldots, v(d_n), h)$ (with parameters in $G$) that describes the same cut  over $G'$ as $\eta(y)$.

    Note that
\begin{eqnarray*}
\Delta_1=\big\{\sum_{i=1}^nq_iv(d_i)\in G'\mid \prod_{i=1}^nd_i^{q_i}<c-d_0\ \&\  c\in C\ \&\ q_i\in \mathbb{Q} \big\} \text{ and }\\
\Delta_2=\big\{\sum_{i=1}^nq_iv(d_i)\in G'\mid b-d_0< \prod_{i=1}^nd_i^{q_i}\ \&\ b\in B\ \&\ b>d_0\ \&\ q_i\in \mathbb{Q}\big\}.
\end{eqnarray*}
Recall that  $r\in S$ computes the complete type  $\tau(x, \bar{a})$ extending the computable partial type $\tau'(x, \bar{a})$ we wish to realize in $R$.   Moreover, given some $(q_1, \ldots, q_n)\in \mathbb{Q}^n$, the statement that determines whether $\sum_{i=1}^nq_iv(d_i)$  is in $\Delta_1$ or $\Delta_2$ can be computably located in $\tau$.   Hence, there is some Turing reduction $\Upsilon$ that computes  from $r$ whether a given \mbox{$(q_1, \ldots, q_n)\in \mathbb{Q}^n$} satisfies $\sum_{i=1}^nq_iv(d_i)\in \Delta_1$ or $\sum_{i=1}^nq_iv(d_i)\in\Delta_2$.

Take a nonzero $g\in v(R)$; such a $g$ exists by (\ref{FArchComp}) and Theorem \ref{recsatG}.  Since $r\in S=A_{[g], g}$ by assumption (\ref{FArchComp}), there exists some $g_r\in v(R)$ such that $\frac{g_r}{g}=r$.

We now describe $\tilde{\eta}(y, v(d_1), \ldots, v(d_n), g_r)$.
For each  pair of rationals $(q< q')$, each stage $s\in\mathbb{N}$, and $(q_1, \ldots, q_n)\in \mathbb{Q}^n$, compute whether Turing reduction $\Upsilon$, using only the information that some real $\tilde{r}$ satisfies  $q<\tilde{r}< q'$, halts in $s$ steps and outputs whether $\sum_{i=1}^nq_iv(d_i)$ is in  $\Delta_1$ or $\Delta_2$.  If $\Upsilon$ halts in this situation, enumerate either the formula
\begin{eqnarray*}
qg<g_r< q'g \rightarrow \sum_{i=1}^nq_iv(d_i)< y \text{ if computation says } \sum_{i=1}^nq_iv(d_i)\in\Delta_1\\
\text{ or the formula }\\
qg<g_r< q'g \rightarrow  y <\sum_{i=1}^nq_iv(d_i)\text{ if computation says } \sum_{i=1}^nq_iv(d_i)\in\Delta_2
\end{eqnarray*}
into $\tilde{\eta}(y, v(d_1), \ldots, v(d_n), g_r)$.  The partial type $\tilde{\eta}(y, v(d_1), \ldots, v(d_n), g_r)$ is  computably enumerable, and it is finitely satisfiable in $G$ because $G$ is a divisible group.  By assumption (\ref{FArchComp}), $G$ is recursively saturated and this implies  there exists some $h\in G$ so that  $\tilde{\eta}(h, v(d_1), \ldots, v(d_n), g_r)$ holds in $G$.  By our choice of $g_r$ and definition of $\tilde{\eta}$, we have that $\Delta_1< h<\Delta_2$.

Let $a_h\in R$ satisfy $v(a_h)=h$ and $a>0$.   Then,
\begin{eqnarray*}
(\forall c\in C)(\forall b\in B)(b>d_0\implies v(c-d_0)<v(a)<v(b-d_0) \\{\text { and, hence, }}
(\forall c\in C)(\forall b\in B)(b>d_0\implies b-d_0<a<c-d_0).
\end{eqnarray*}
Thus, $B<a+d_0<C$, so $a+d_0$ realizes the type given in (\ref{Fcut}), as required.

Therefore, in each of the three cases, we satisfied the type $\tau$.  Hence, $R$ is recursively saturated.  This completes the sufficiency direction of the proof.
\end{proof}

It is unclear whether condition (\ref{FtypeScomp}) follows from the other three conditions listed in Theorem \ref{recsatrcf}.  In Theorem \ref{recsatG}, we used a valuation basis for $G$ to avoid the need for such a condition.  However,  a real closed field of finite absolute transcendence degree need not admit a valuation transcendency basis; see \cite{kuh} for a precise definition of valuation transcendency basis and counterexamples.

\medskip
\noindent {\bf Acknowledgement}: \ The authors  thank Victor Harnik
for making \cite{hr} available to us. Research visits of the first
and second authors were partially supported by funds from the
Gleichstellungsrat of the University of Konstanz. The third author
was partially supported by NSF DMS-1100604. This research was
partially done while the third author was a visiting fellow at the
Isaac Newton Institute for the Mathematical Sciences in the
`Semantics \& Syntax' program.

\vs

\end{document}